\documentclass[final,leqno]{siamltex}
\usepackage{mathbbol}
\usepackage{mathrsfs}
\usepackage{amsfonts}
\usepackage{stmaryrd}
\usepackage{amsmath}
\usepackage{amssymb}
\usepackage{multirow}
\usepackage{caption}
\usepackage{tabularx}
\usepackage{stmaryrd}
\usepackage{booktabs}
\usepackage{cite}
\usepackage{amssymb}
\usepackage{verbatim}
\usepackage{color}
\usepackage{graphicx}
\usepackage{subfigure}
\usepackage{hyperref}
\usepackage[all,cmtip]{xy}
\usepackage{epsfig,epstopdf,color,bm}
\usepackage{tabularx}

\input psfig.sty


\newcommand{\p}{\partial}
\newcommand{\og}{\omega}
\newcommand{\tht}{\theta}
\newcommand{\Og}{\Omega}
\newcommand{\ep}{\varepsilon}
\newcommand{\fl}[2]{\frac{#1}{#2}}
\newcommand{\dt}{\delta}
\newcommand{\nn}{\nonumber}
\newcommand{\Dt}{\Delta}

\newcommand{\be}{\begin{equation}}
\newcommand{\ee}{\end{equation}}
\newcommand{\ba}{\begin{array}}
\newcommand{\ea}{\end{array}}
\newcommand{\bea}{\begin{eqnarray}}
\newcommand{\eea}{\end{eqnarray}}
\newcommand{\beas}{\begin{eqnarray*}}
\newcommand{\eeas}{\end{eqnarray*}}
\newtheorem{remark}{Remark}[section]
\newcommand{\bx}{{\bf x} }

\def\C{{\mathbb C}}
\def\R{{\mathbb R}}

\def\({\left(}
\def\){\right)}
\def\<{\left\langle}
\def\>{\right\rangle}

\title{Error estimates of a regularized finite difference method for the logarithmic Schr\"odinger equation\thanks{This work was partially supported by the Ministry
of Education of Singapore grant
R-146-000-223-112 (MOE2015-T2-2-146) (W. Bao).}}
\author{Weizhu Bao\thanks{Department of Mathematics,
National University of Singapore, Singapore 119076 ({\tt matbaowz@nus.edu.sg},
URL: http://www.math.nus.edu.sg/\~{}bao/)} \and
R\'{e}mi Carles\thanks{CNRS, Institut Montpelli\'erain Alexander
  Grothendieck, Univ. Montpellier, France ({\tt
    Remi.Carles@math.cnrs.fr}, URL: http://carles.perso.math.cnrs.fr)}
    \and
Chunmei Su\thanks{Department of Mathematics, University of Innsbruck, Innsbruck 6020, Austria ({\tt sucm13@163.com})}
\and
Qinglin Tang\thanks{School of Mathematics,
Sichuan University, Chengdu 610064 ({\tt qinglin\_tang@scu.edu.cn})}}

\date{}
\begin{document}

\maketitle

\begin{abstract}
We present a regularized finite difference method for the logarithmic
Schr\"odinger equation (LogSE) and establish its error bound. Due to the blow-up of the logarithmic nonlinearity, i.e. $\ln \rho\to -\infty$ when $\rho\rightarrow 0^+$ with $\rho=|u|^2$ being the density and $u$ being the complex-valued wave function or order parameter, there are significant difficulties in designing numerical methods and establishing
their error bounds for the LogSE. In order to suppress the round-off error and to avoid blow-up, a regularized logarithmic Schr\"odinger equation (RLogSE) is proposed with a small regularization parameter $0<\ep\ll 1$ and linear convergence is established between the solutions of RLogSE and LogSE in term of  $\ep$. Then a semi-implicit finite difference method is presented for discretizing the RLogSE and error estimates are established in terms of the mesh size $h$ and time step $\tau$ as well as the small regularization parameter $\ep$. Finally numerical results are reported to confirm our error bounds.
\end{abstract}

\begin{keywords}
Logarithmic Schr\"odinger equation, logarithmic nonlinearity,
regularized logarithmic Schr\"odinger equation,
semi-implicit finite difference method, error estimates, convergence rate.
\end{keywords}
\begin{AMS}
35Q40, 35Q55, 65M15, 81Q05
\end{AMS}

\pagestyle{myheadings}\thispagestyle{plain}
\section{Introduction}
We consider the logarithmic Schr\"odinger equation (LogSE) which arises in a model of nonlinear wave mechanics (cf. \cite{BiMy76}),
\be\label{LSE}
\left\{
\begin{aligned}
&i\p_t u(\bx,t)+\Delta u(\bx,t)=\lambda u(\bx,t)\,\ln(|u(\bx,t)|^2),\quad \bx\in \Omega, \quad t>0,\\
& u(\bx,0)=u_0(\bx),\quad \bx\in \overline{\Omega},
\end{aligned}
\right.
\ee
where $t$ is time, $\bx\in \mathbb{R}^d$ ($d=1,2,3$) is the spatial coordinate, $\lambda\in \mathbb{R}\backslash\{0\}$ measures the force of the nonlinear interaction,
$u:=u(\bx,t)\in\mathbb{C}$ is the dimensionless wave function or order parameter and
$\Omega=\mathbb{R}^d$ or $\Omega\subset\mathbb{R}^d$ is a bounded
domain with homogeneous Dirichlet or periodic boundary condition\footnote{Whenever
  we consider this case, it is assumed that the boundary is Lipschitz
  continuous.}
fixed on the boundary. It admits applications to
quantum mechanics \cite{BiMy76, BiMy79}, quantum optics \cite{buljan, KEB00}, nuclear physics \cite{Hef85}, transport and diffusion phenomena \cite{DFGL03, hansson},
open quantum systems \cite{yasue, HeRe80}, effective quantum gravity \cite{Zlo10}, theory of superfluidity and
Bose-Einstein condensation \cite{BEC}. The logarithmic Schr\"odinger equation enjoys three
conservation laws, {\sl mass}, {\sl momentum} and {\sl
  energy} \cite{CazCourant,CaHa80}, like in the case of the nonlinear Schr\"odinger
equation with a power-like nonlinearity (e.g. cubic):
\be\label{conserv}
\begin{split}
M(t):&=\|u(\cdot,t)\|_{L^2(\Omega)}^2=\int_\Omega|u(\bx,t)|^2d\bx\equiv
  \int_\Omega|u_0(\bx)|^2d\bx=M(0),\\
P(t):&=\mathrm{Im} \int_\Omega \overline{u}(\bx,t)\nabla
  u(\bx,t)d\bx\equiv \mathrm{Im} \int_\Omega \overline {u_0}(\bx)\nabla
  u_0(\bx)d\bx=P(0),\quad t\ge0,\\
E(t):&=\int_\Omega\left[|\nabla u(\bx,t)|^2d\bx+\lambda F(|u(\bx,t)|^2)\right]d\bx\\
&\equiv\int_\Omega\left[|\nabla u_0(\bx)|^2+\lambda F(|u_0(\bx)|^2)\right]d\bx=E(0),
\end{split}
\ee
where ${\rm Im}\, f$ and $\overline{f}$ denote the imaginary part and complex conjugate of $f$, respectively,  and
\be
F(\rho)=\int_0^\rho \ln(s)ds=\rho\, \ln \rho-\rho, \qquad \rho\ge0.
\ee

On a mathematical level, the logarithmic nonlinearity possesses
several features that make it quite different from more standard
nonlinear Schr\"odinger equations. First, the nonlinearity is not
locally Lipschitz continuous because of the behavior of the logarithm
function at the origin. Note that in view of numerical simulation,
this singularity of the ``nonlinear potential'' $\lambda\ln
(|u(\bx,t)|^2)$ makes the choice of a discretization quite
delicate. The second aspect is that whichever the sign of $\lambda$,
the nonlinear potential energy in $E$ has no definite sign. In fact,
whether the nonlinearity is repulsive/attractive (or defocusing/focusing)
depends on both $\lambda$ and the value of the density $\rho:=\rho(\bx,t)
=|u(\bx,t)|^2$. When $\lambda>0$, then the nonlinearity $\lambda \rho\, \ln\rho$ is
repulsive when $\rho>1$; and respectively, it is attractive when $0<\rho<1$.
On the other hand, when $\lambda<0$, then the nonlinearity $\lambda \rho\, \ln\rho$ is
attractive when $\rho>1$; and respectively, it is repulsive when $0<\rho<1$.
Therefore,
solving the Cauchy problem for \eqref{LSE} is not a trivial issue, and
constructing solutions which are defined for all time requires some
work; see \cite{CaHa80,GLN10,CaGa-p}. Essentially, the outcome is that
if $u_0$ belongs to (a subset of) $H^1(\Omega)$, \eqref{LSE} has a
unique, global solution, regardless of the space dimension $d$ (see also
Theorem~\ref{theo:cauchy} below).
\smallbreak

Next, the large time behavior reveals new phenomena. A
first remark suggests that nonlinear effects are weak. Indeed,  unlike what happens in the case of a homogeneous nonlinearity
  (classically of the form $\lambda|u|^{p}u$), replacing $u$ with $k  u$
  ($k\in \C\setminus\{0\}$)
  in \eqref{LSE}  has only little effect, since we have
  \begin{equation*}
    i\partial_t (k u) +\Delta (k u) =\lambda k u
    \ln\(|k u|^2\) - \lambda (\ln |k|^2 )k u \, .
  \end{equation*}
The scaling factor thus corresponds to a purely time-dependent gauge
transform:
\begin{equation*}
  k u (\bx,t) e^{-it \lambda \ln |k|^2}
\end{equation*}
solves \eqref{LSE} (with initial datum $k u_0$). In
particular, the size of the initial datum does not influence the
dynamics of the solution. In spite of this property which is
reminiscent of linear equations, nonlinear effects are stronger in
\eqref{LSE} than in, say, cubic Schr\"odinger equations in several
respects. For $\Omega=\R^d$, it was established in \cite{Caz83} that in the case
$\lambda<0$, no solution is dispersive (not even for small data, in
view of the above remark), while if $\lambda>0$, the results from
\cite{CaGa-p} show that every solution disperses, at a faster rate
than for the linear equation.
\smallbreak

In view of the gauge invariance of the nonlinearity, for
$\Omega=\mathbb R^d$, \eqref{LSE}
enjoys the standard Galilean invariance: if $u (\bx,t)$ solves
\eqref{LSE}, then, for any $\mathbf v\in \R^d$, so does
\begin{equation*}
  u(\bx -2\mathbf v t,t)e^{i\mathbf v\cdot \bx -i|\mathbf v|^2t}.
\end{equation*}
A remarkable feature of \eqref{LSE} is that it possesses a large set
of explicit solutions. In the case $\Omega=\R^d$: if $u_0$ is
Gaussian, $u(\cdot,t)$ is Gaussian for all time, and solving
\eqref{LSE} amounts to solving ordinary differential equations
\cite{BiMy76}. For simplicity of notation, we take the one-dimensional case as an example. If the initial data in \eqref{LSE} with $\Omega={\mathbb R}$
is taken as
\begin{equation*}
  u_0(x) = b_0 e^{-\frac{a_0}{2} x^2+ivx}, \qquad x\in{\mathbb R},
\end{equation*}
where $a_0,b_0\in \C$ and $v\in\mathbb{R}$ are given constants
satisfying $\alpha_0:=\mathrm{Re}\, a_0>0$ with ${\rm Re}\, f$ denoting the real part of $f$,
then the solution of \eqref{LSE} is given by \cite{Ar16,CaGa-p}
\be\label{Gaus}
  u(x,t) = \frac{b_0}{\sqrt{r(t)}}e^{i(vx-v^2t)+Y(x-2vt,t)},\qquad x\in{\mathbb R}, \quad t\ge0,
\ee
with
\be
Y(x,t)=-i\phi(t)-\alpha_0
  \frac{x^2}{2r(t)^2}+i\frac{\dot r(t)}{r(t)}\frac{x^2}{4},
  \qquad x\in{\mathbb R}, \quad t\ge0,
\ee
where
$\phi:=\phi(t)\in\R$ and $r:=r(t)>0$ solve the ODEs \cite{Ar16,CaGa-p}
\be\label{ode}
\begin{split}
&  \dot \phi= \frac{\alpha_0}{r^2} +\lambda \ln|b_0|^2-\lambda \ln
  r,\quad \phi(0)=0,\\
& \ddot r = \frac{4\alpha_0^2}{r^3}+\frac{4\lambda \alpha_0}{r},\quad
  r(0)=1,\ \dot r(0)= -2\,\mathrm{Im}\,a_0.
\end{split}
\ee
In the case $\lambda<0$, the function $r$ is  (time)
periodic (in agreement with the absence of dispersive
effects). In particular, if $a_0=-\lambda>0$, it follows from \eqref{ode} that
$r(t)\equiv 1$ and $\phi(t)= \phi_0t$ with $\phi_0=\lambda\left[\ln(|b_0|^2)-1\right]$, which generates the uniformly {\bf moving Gausson} as \cite{Ar16,CaGa-p}
\be\label{Gausson}
u(x,t)=b_0 e^{\frac{\lambda}{2}(x-2vt)^2+i(vx-(\phi_0+v^2)t)}, \qquad x\in{\mathbb R}, \quad t\ge0.
\ee
As a very special case with $b_0=e^{1/2}$ and $v=0$ such that $\phi_0=0$, one can get the {\bf static
Gausson} as
\be
u(x,t)=e^{1/2}e^{\lambda|x|^2/2}, \qquad x\in{\mathbb R}, \quad t\ge0.
\ee
This special solution is orbitally stable \cite{Caz83,CaLi82}. On the other hand, in the case $\lambda>0$, it is proven in \cite{CaGa-p}
that for general initial data (not necessarily Gaussian), there exists a
universal dynamics. For extensions to higher dimensions, we refer to \cite{Ar16,CaGa-p} and references therein.
Therefore, \eqref{LSE} possesses several specific features, which make
it quite different from the nonlinear Schr\"odinger equation.

Different numerical methods have been proposed and analyzed for the nonlinear Schr\"odinger equation with smooth nonlinearity (e.g. cubic nonlinearity) in the literature,
such as the finite difference methods \cite{bao2012, bao2013}, finite
element methods \cite{akrivis, FEM} and the time-splitting pseudospectral methods \cite{bao2003, taha1984}. However, they cannot be applied to the
LogSE \eqref{LSE} directly due to the blow-up of the logarithmic nonlinearity, i.e. $\ln \rho\to -\infty$ when $\rho\rightarrow 0^+$.
The main aim of this paper is to present a regularized finite difference method for the LogSE \eqref{LSE} by introducing a proper
regularized logarithmic Schr\"odinger equation (RLogSE) and
then discretizing the RLogSE via a semi-implicit finite difference method.
Error estimates will be established between the solutions of
LogSE and RLogSE as well as their numerical approximations.

The rest of the paper is organized as follows. In
Section~\ref{sec:regul}, we propose a regularized version of
\eqref{LSE} with a small regularization parameter $0<\ep\ll1$, and analyze its properties, as well as the convergence of
its solution to the solution of \eqref{LSE}. In
Section~\ref{sec:scheme}, we introduce a semi-implicit finite difference method for discretizing the regularized logarithmic Schr\"odinger equation, and prove an error estimate, in which the dependence of the constants with respect to the regularization parameter $\ep$ is tracked very explicitly. Finally, numerical results are provided in Section~\ref{sec:illust} to confirm our error bounds and to demonstrate the efficiency and accuracy of the proposed numerical method.

Throughout the paper, we use $H^m(\Omega)$ and
$\|\cdot\|_{H^m(\Omega)}$ to denote the standard Sobolev spaces and
their norms, respectively. In particular, the norm and inner product
of $L^2(\Omega)=H^0(\Omega)$ are denoted by $\|\cdot\|_{L^2(\Omega)}$
and $(\cdot, \cdot)$, respectively. Moreover, we adopt $A\lesssim B$ to
mean that there exists a generic constant $C>0$ independent of the regularization parameter $\ep$, the time step
$\tau$ and the mesh size $h$ such that $A\le C\,B$, and $\lesssim_c$ means the constant $C$ depends on $c$.

\section{A regularized logarithmic Schr\"odinger equation}
\label{sec:regul}
It turns out that a direct simulation of the solution of \eqref{LSE}
is very delicate, due to the singularity of the logarithm at the
origin, as discussed in the introduction. Instead of working directly
with \eqref{LSE}, we shall
consider the following regularized logarithmic Schr\"odinger equation (RLogSE)
with a samll regularized parameter
$0<\ep\ll1 $ as
\be\label{RLSE}
\left\{
\begin{aligned}
&i\p_t u^\ep(\bx,t)+\Delta u^\ep(\bx,t)=\lambda
u^\ep(\bx,t)\,\ln\(\ep+|u^\ep(\bx,t)|\)^2,\quad \bx\in \Omega, \quad
t>0,\\
&u^\ep(\bx,0)=u_0(\bx),\quad \bx\in \overline{\Omega}.
\end{aligned}
\right.
\ee

\subsection{Conserved quantities}

For the RLogSE \eqref{RLSE}, it can be similarly deduced
that the mass, momentum,  and energy are conserved.
\begin{proposition}\label{prop:conserv}
The  mass, momentum,  and `regularized' energy are formally conserved for the RLogSE \eqref{RLSE}:
\be\label{regen}
\begin{split}
&M^\ep(t):=\int_\Omega|u^\ep(\bx,t)|^2d\bx\equiv \int_\Omega|u_0(\bx)|^2d\bx=
M(0),\\
&P^\ep(t):= \mathrm{Im} \int_\Omega \overline{u^\ep}(\bx,t)\nabla
  u^\ep(\bx,t)d\bx\equiv \mathrm{Im} \int_\Omega \overline{u_0}(\bx)\nabla
  u_0(\bx)d\bx=P(0), \quad t\ge0,\\
&E^{\ep}(t):=\int_{\Omega}\big[|\nabla u^\ep(\bx,t)|^2+\lambda
F_\ep(|u^\ep(\bx,t)|^2)\big](\bx,t)d\bx\\
&\qquad\,\,\,\equiv\int_{\Omega}\big[|\nabla u_0(\bx)|^2+\lambda F_\ep(|u_0(\bx)|^2)\big]d\bx=E^\ep(0),
\end{split}
\ee
where
\be\label{Renerg}
\begin{aligned}
F_\ep(\rho)&=\int_0^\rho \ln(\ep+\sqrt{s})^2ds\\
&=\rho\ln\left(\ep+\sqrt{\rho}\right)^2-\rho+2 \ep \sqrt{\rho}- \ep^2
\ln\left(1+\sqrt{\rho}/\ep\right)^2, \qquad \rho\ge0.
\end{aligned}
\ee
\end{proposition}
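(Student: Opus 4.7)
The plan is to verify each of the three conservation laws by testing the regularized equation \eqref{RLSE} against a suitable quantity and integrating over $\Omega$, exactly as one does for the cubic NLS, the only novelty being that the nonlinear ``potential'' is now $V_\ep(|u^\ep|^2)=\lambda\ln(\ep+|u^\ep|)^2$, which is real, smooth on $\R_+$, and bounded near $0$. Throughout, boundary terms arising from integration by parts vanish because of the choice of boundary condition (decay at infinity on $\R^d$, periodicity, or homogeneous Dirichlet).

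For the \textbf{mass}, I would multiply \eqref{RLSE} by $\overline{u^\ep}$, integrate over $\Omega$, and take the imaginary part. The Laplacian term contributes $\mathrm{Im}\int\overline{u^\ep}\Delta u^\ep\,d\bx = -\mathrm{Im}\int|\nabla u^\ep|^2\,d\bx=0$, and the nonlinear term contributes $\mathrm{Im}\int\lambda|u^\ep|^2\ln(\ep+|u^\ep|)^2\,d\bx=0$ since the integrand is real. What remains is $\frac12\frac{d}{dt}\|u^\ep\|_{L^2(\Omega)}^2=0$. For the \textbf{momentum}, I would write the equation as $i\p_tu^\ep=-\Delta u^\ep+V_\ep(|u^\ep|^2)u^\ep$ with $V_\ep$ real, compute $\p_t(\overline{u^\ep}\nabla u^\ep)$, and take the imaginary part. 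The Laplacian terms combine into a divergence via the standard identity $\mathrm{Re}(\overline{u^\ep}\nabla\Delta u^\ep-\Delta\overline{u^\ep}\nabla u^\ep)=\nabla\cdot(\cdots)$, while the nonlinear contribution becomes $-|u^\ep|^2\nabla V_\ep(|u^\ep|^2)=-\nabla H_\ep(|u^\ep|^2)$ with $H_\ep'(\rho)=\rho V_\ep'(\rho)$, hence also a pure divergence. Integration then yields $\frac{d}{dt}P^\ep(t)=0$.

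The \textbf{energy} identity is the one point requiring care. I would differentiate $E^\ep$ in time,
\begin{equation*}
\frac{d}{dt}E^\ep(t)=2\,\mathrm{Re}\int_\Omega\nabla\overline{u^\ep}\cdot\nabla\p_tu^\ep\,d\bx+\lambda\int_\Omega F_\ep'(|u^\ep|^2)\,\p_t(|u^\ep|^2)\,d\bx,
\end{equation*}
observe that by definition $F_\ep'(\rho)=\ln(\ep+\sqrt{\rho})^2$, so that $\lambda F_\ep'(|u^\ep|^2)=\lambda\ln(\ep+|u^\ep|)^2$, and use $\p_t(|u^\ep|^2)=2\,\mathrm{Re}(\overline{u^\ep}\p_tu^\ep)$. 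The nonlinear contribution becomes $2\,\mathrm{Re}\int_\Omega\lambda\ln(\ep+|u^\ep|)^2\,\overline{u^\ep}\,\p_tu^\ep\,d\bx$, and substituting $\lambda\overline{u^\ep}\ln(\ep+|u^\ep|)^2=-i\p_t\overline{u^\ep}+\Delta\overline{u^\ep}$ from the conjugated equation produces two pieces: $-2i\int|\p_tu^\ep|^2\,d\bx$, whose real part is zero, and $2\,\mathrm{Re}\int\Delta\overline{u^\ep}\,\p_tu^\ep\,d\bx=-2\,\mathrm{Re}\int\nabla\overline{u^\ep}\cdot\nabla\p_tu^\ep\,d\bx$. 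These cancel the kinetic contribution exactly and give $\frac{d}{dt}E^\ep\equiv 0$.

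Finally, I would verify the closed-form expression \eqref{Renerg} for $F_\ep$ by an elementary antiderivative computation: with the substitution $s=u^2$, $F_\ep(\rho)=4\int_0^{\sqrt\rho}u\ln(\ep+u)\,du$, and integration by parts (together with $u^2/(\ep+u)=u-\ep+\ep^2/(\ep+u)$) produces the four listed terms. The only potential obstacle is to justify the formal manipulations when $u^\ep$ may vanish, since we divided by $u^\ep$ when substituting from the equation; but because $\ln(\ep+|u^\ep|)^2$ is bounded and smooth in $|u^\ep|$ and $\p_tu^\ep,\Delta u^\ep\in L^2$, the products $\overline{u^\ep}\p_tu^\ep\ln(\ep+|u^\ep|)^2$ are integrable and every step above is rigorous for solutions in the regularity class supplied by the Cauchy theory of \eqref{RLSE}.
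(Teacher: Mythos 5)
Your proof is correct and follows essentially the same route as the paper: mass and momentum conservation are treated as standard consequences of the nonlinear potential being real, and the energy identity is obtained by differentiating $E^\ep$ in time, substituting from the equation, and observing that $2\,\mathrm{Re}\,(i\int|\p_t u^\ep|^2)=0$. The only cosmetic difference is that you use $F_\ep'(\rho)=\ln(\ep+\sqrt\rho)^2$ directly from the integral definition, whereas the paper differentiates the closed-form expression \eqref{Renerg} and checks that the extra terms cancel; these are equivalent.
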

\begin{proof}
The conservation for mass and momentum is standard, and relies on the fact that the
right hand side of \eqref{RLSE} involves $u^\ep$ multiplied
by a \emph{real} number. For the energy $E^\ep(t)$, we compute
\begin{align*}
\fl{d}{dt}E^\ep(t)&=2\,\mathrm{Re}\int_\Omega \left[\nabla u^\ep\cdot \nabla \p_t \overline{u^\ep}+\lambda  u^\ep\p_t \overline{u^\ep}\ln (\ep+|u^\ep|)^2-\lambda  u^\ep\p_t \overline{u^\ep}\right](\bx,t)d\bx\\
&\quad+2\lambda\int_\Omega \p_t|u^\ep|\Big[\ep+\fl{|u^\ep|^2-\ep^2}{\ep+|u^\ep|}\Big](\bx,t)d\bx\\
&=2\,\mathrm{Re}\int_\Omega \left[\p_t \overline{u^\ep}\left(-\Delta u^\ep+\lambda u^\ep\ln (\ep+|u^\ep|)^2\right)\right](\bx,t)d\bx\\
&=2\,\mathrm{Re}\int_\Omega i|\p_t u^\ep|^2(\bx,t)d\bx=0,\quad t\ge0,
\end{align*}
which completes the proof.
\end{proof}

Note however that since the above `regularized' energy involves $L^1$-norm
of $u^\ep$ for any $\ep>0$, $E^\ep$ is obviously well-defined for $u_0\in H^1(\Omega)$ when $\Omega$ has finite measure, but not when $\Omega=\R^d$. This aspect is discussed more into details
in Subsections~\ref{sec:whole} and \ref{sec:cvenergy}.

\subsection{The Cauchy problem}

For $\alpha>0$ and $\Omega=\R^d$, denote by $L^2_\alpha$ the weighted $L^2$
space
\[L^2_\alpha:=\{v\in L^2(\mathbb{R}^d), \quad \bx\longmapsto \langle
  \bx \rangle^\alpha v(\bx)\in L^2(\mathbb{R}^d)\},\]
where $\langle \bx \rangle :=\sqrt{1+|\bx|^2}$, with norm
$$\|v\|_{L^2_\alpha}:=\|\langle \bx\rangle^\alpha
v(\bx)\|_{L^2(\mathbb{R}^d)}.$$
In the case where $\Omega$ is bounded, we simply set
$L^2_\alpha=L^2(\Omega)$.
Regarding the Cauchy problems \eqref{LSE} and \eqref{RLSE}, we have the following result.

\bigskip

\begin{theorem}\label{theo:cauchy}
Let $\lambda\in \R$ and $\ep>0$. Consider \eqref{LSE} and \eqref{RLSE}
on $\Omega=\R^d$, or bounded $\Omega$ with homogeneous Dirichlet or periodic
boundary condition.
Consider an initial datum
$u_0\in H^1_0(\Omega)\cap L^2_\alpha$, for some $0<\alpha\le 1$.
\begin{itemize}
\item There exists a unique, global  solution $u\in L^\infty_{\rm
    loc}(\R; H^1_0(\Omega)\cap L^2_\alpha)$ to \eqref{LSE}, and a unique, global
  solution $u^\ep\in L^\infty_{\rm
    loc}(\R; H^1_0(\Omega)\cap L^2_\alpha)$ to \eqref{RLSE}.
\item If in addition $u_0\in H^2(\Omega)$, then $u,u^\ep \in
L^\infty_{\rm loc}(\R;H^2(\Omega))$.
\item In the case $\Omega=\R^d$, if in addition $u_0\in H^2\cap
  L_2^2$, then  $u,u^\ep \in L^\infty_{\rm loc}(\R;H^2\cap L_2^2)$.
\end{itemize}
\end{theorem}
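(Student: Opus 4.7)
The plan is to decouple the two problems: for the RLogSE, the nonlinearity $N_\ep(u):=\lambda u\ln(\ep+|u|)^2$ is locally Lipschitz on $\C$ with at most logarithmic growth at infinity, so it falls under the standard $H^1$-subcritical well-posedness theory for nonlinear Schr\"odinger equations; the LogSE is then recovered either by invoking the classical theory of Cazenave--Haraux \cite{CaHa80} or by passing to the limit $\ep\to 0^+$ in the RLogSE solutions. Since the mass, momentum and (regularized) energy conservation laws are already established in Proposition~\ref{prop:conserv}, the role of the proof is essentially to upgrade these \emph{a priori} bounds into global existence and to propagate the weighted and higher regularity.

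For local well-posedness of \eqref{RLSE} in $H^1_0(\Omega)$, I would verify that $N_\ep$ maps bounded sets of $H^1_0$ into bounded sets of $H^1_0$ and is Lipschitz on those sets (using the elementary bound $|\ln(\ep+a)-\ln(\ep+b)|\le|a-b|/\ep$ for $a,b\ge 0$ together with the Sobolev embedding $H^1\hookrightarrow L^p$ for appropriate $p$), then close a contraction on $C([0,T];H^1_0(\Omega))$ via the Duhamel formula. Mass conservation gives $\|u^\ep(t)\|_{L^2}=\|u_0\|_{L^2}$, while the pointwise bound $F_\ep(\rho)\ge-C$ uniform in $\ep$ (and, when $\lambda<0$, a Gagliardo--Nirenberg or logarithmic Sobolev inequality to control $|F_\ep(|u^\ep|^2)|$ by $\eta\|\nabla u^\ep\|_{L^2}^2+C_\eta$) turns energy conservation into a uniform $H^1$ bound, yielding global existence. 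The $L^2_\alpha$ estimate is then obtained by differentiating $\int\langle\bx\rangle^{2\alpha}|u^\ep|^2\,d\bx$: the nonlinear term disappears upon taking imaginary parts since $\ln(\ep+|u^\ep|)^2\in\R$, and the kinetic contribution yields a commutator bounded by $\|\nabla u^\ep\|_{L^2}\|u^\ep\|_{L^2_\alpha}$ for $\alpha\le 1$, closing via Gronwall.

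For the LogSE, the main obstacle is that $N(u)=\lambda u\ln|u|^2$ fails to be locally Lipschitz at $u=0$, so no fixed point argument applies directly. My plan is to pass to the limit $\ep\to 0^+$: the bounds just obtained for $u^\ep$ in $L^\infty_{\mathrm{loc}}(H^1_0\cap L^2_\alpha)$ are in fact uniform in $\ep$, and the equation provides a uniform bound on $\partial_t u^\ep$ in $L^\infty_{\mathrm{loc}}(H^{-1})$, so Aubin--Lions yields strong convergence along a subsequence in $L^2_{\mathrm{loc}}$, enough to identify the limit of $u^\ep\ln(\ep+|u^\ep|)^2$ with $u\ln|u|^2$. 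Uniqueness, which cannot come from a fixed point argument, is recovered from Cazenave's monotonicity inequality
\[
\bigl|\mathrm{Im}\bigl[(z\ln|z|^2-w\ln|w|^2)\overline{(z-w)}\bigr]\bigr|\lesssim |z-w|^2,\qquad z,w\in\C,
\]
which, when applied to the difference of two candidate solutions, produces a linear Gronwall inequality for $\|u-v\|_{L^2}^2$.

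The higher regularity bullets are then routine. For $u_0\in H^2$, differentiating \eqref{RLSE} in $t$ gives a linear equation for $\partial_t u^\ep$ with coefficients controlled by the previous $H^1$ bound, so $\partial_t u^\ep\in L^\infty_{\mathrm{loc}}(L^2)$, and rewriting the equation as $\Delta u^\ep=i\partial_t u^\ep-\lambda u^\ep\ln(\ep+|u^\ep|)^2$ promotes this to $u^\ep\in L^\infty_{\mathrm{loc}}(H^2)$; the same argument survives in the $\ep=0$ limit. The weighted $L^2_2$ bound for $\Omega=\R^d$ follows from computing $\frac{d}{dt}\||\bx|^2 u^\ep\|_{L^2}^2$ (the nonlinear contribution vanishes upon taking imaginary parts) and closing via Gronwall using the $L^\infty_{\mathrm{loc}}(H^1\cap L^2_1)$ bound already at hand.
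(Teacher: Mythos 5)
Your overall architecture coincides with the paper's: standard locally-Lipschitz Cauchy theory for \eqref{RLSE}, uniform-in-$\ep$ bounds plus compactness to produce a solution of \eqref{LSE}, uniqueness via the algebraic inequality of Lemma~\ref{pre}, Kato's differentiate-in-time trick for $H^2$, and a virial-type computation for $L^2_2$. However, there is one step that fails as written: your mechanism for the uniform $H^1$ bound on $\Omega=\R^d$. You propose to combine conservation of $E^\ep$ with the pointwise lower bound $F_\ep(\rho)\ge -C$. On the whole space this does not close, for two reasons. First, $E^\ep$ contains the term $2\ep\|u^\ep\|_{L^1(\R^d)}$ (coming from $2\ep\sqrt{\rho}$ in \eqref{Renerg}), and for $u_0\in H^1(\R^d)\cap L^2_\alpha$ with $0<\alpha\le 1$ the energy $E^\ep(u_0)$ need not even be finite when $d\ge 2$ --- the paper explicitly flags this after Proposition~\ref{prop:conserv}. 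Second, even setting that aside, the pointwise bound $F_\ep(\rho)\ge -C$ only yields $\int_\Omega F_\ep\ge -C|\Omega|$, which is vacuous on $\R^d$; controlling the negative part of $\int F_\ep$ requires something like $\|u^\ep\|_{L^{2-\eta}}$, i.e. the weighted norm, while your $L^2_\alpha$ estimate is itself derived \emph{from} the $H^1$ bound, creating a circularity you do not resolve. The paper avoids all of this on $\R^d$ (and for periodic boundary conditions) by differentiating the equation once in space and performing the $\dot H^1$ energy estimate directly, using $|u^\ep|/(\ep+|u^\ep|)\le 1$ to get $\frac{d}{dt}\|\nabla u^\ep\|_{L^2}^2\le 4|\lambda|\,\|\nabla u^\ep\|_{L^2}^2$ uniformly in $\ep$; the conserved-energy argument (with the interpolation control of $\int |u^\ep|^2|\ln(\ep+|u^\ep|)|$) is reserved for the bounded Dirichlet case, where the direct estimate is obstructed by boundary terms. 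You should adopt that split, or at least combine your energy argument with the weighted bound in a single Gronwall loop and restrict it to finite-measure $\Omega$.

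A smaller inaccuracy: in the last bullet you claim to close the $L^2_2$ Gronwall estimate ``using the $L^\infty_{\rm loc}(H^1\cap L^2_1)$ bound already at hand,'' but the differential inequality produces the factor $\|\bx\cdot\nabla u\|_{L^2}$, which is not controlled by $H^1\cap L^2_1$. The paper bounds it by $2\|u\|_{L^2}+\|u\|_{L^2_2}+\|\Delta u\|_{L^2}$ via integration by parts, i.e. it genuinely uses the $H^2$ bound from the second bullet (which is available under the stated hypothesis $u_0\in H^2\cap L^2_2$). This is a one-line repair, but as phrased your closure is not justified.
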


\medskip

\begin{proof}
This result can be proved by using more or less directly the
arguments invoked in \cite{CaGa-p}. First, for fixed $\ep>0$, the
nonlinearity in \eqref{RLSE} is locally Lipschitz, and grows more
slowly than any power for large $|u^\ep|$. Therefore, the standard Cauchy
theory for nonlinear Schr\"odinger equations applies (see in
particular \cite[Corollary~3.3.11 and Theorem~3.4.1]{CazCourant}), and so
if $u_0\in H^1_0(\Omega)$, then \eqref{RLSE} has a unique solution
$u^\ep\in L^\infty_{\rm loc}(\R;H^1_0(\Omega))$. Higher Sobolev
regularity is propagated, with controls depending on $\ep$ in
general.
\smallbreak

A solution $u$ of \eqref{LSE} can be obtained by compactness arguments, by letting
$\ep\to 0$ in \eqref{RLSE}, provided that we have suitable bounds
independent of $\ep>0$.
We have
\begin{equation*}
  i\partial_t \nabla u^\ep + \Delta \nabla u^\ep=2\lambda
  \ln\(\ep+|u^\ep|\)\nabla u^\ep + 2\lambda
  \frac{u^\ep}{\ep+|u^\ep|}  \nabla |u^\ep|.
\end{equation*}
The standard energy estimate (multiply the above equation by $\nabla \overline{u^\ep}$, integrate over $\Omega$ and take the imaginary part)
yields, when $\Omega=\R^d$ or when periodic boundary conditions are considered,
\begin{equation*}
  \frac{1}{2}\frac{d}{dt}\|\nabla u^\ep\|_{L^2(\Omega)}^2 \le 2|\lambda|
\int_\Omega  \frac{|u^\ep|}{\ep+|u^\ep|} \left|\nabla
  |u^\ep|\right|\lvert\nabla u^\ep\rvert d\bx\le 2|\lambda \|\nabla u^\ep\|_{L^2(\Omega)}^2 .
\end{equation*}
Gronwall lemma yields a bound for $u^\ep$ in $L^\infty(0,T;H^1(\Omega))$,
uniformly in $\ep>0$, for any given $T>0$.
Indeed, the above estimate uses the property
\begin{equation*}
  \mathrm{Im}\int_\Omega \nabla \overline{u^\ep}\cdot \Delta \nabla u^\ep\,d\bx=0,
\end{equation*}
which needs not be true when $\Omega$ is bounded and $u^\ep$ satisfies
homogeneous Dirichlet boundary conditions. In that case, we use the
conservation of the energy $E^\ep$ (Proposition~\ref{prop:conserv}),
and write
\begin{align*}
  \|\nabla u^\ep (t)\|_{L^2(\Omega)}^2 &\le E^\ep(u_0)
    +2|\lambda|\int_\Omega|u^\ep(\bx,t)|^2\left\lvert
      \ln\(\ep+|u^\ep(\bx,t)|\)\right\rvert d\bx \\
&\quad+2\ep|\lambda|
    \|u^\ep(t)\|_{L^1(\Omega)}+2|\lambda
    |\ep^2\int_\Omega \left|\ln
      \left(1+|u^\ep(\bx,t)|/\ep\right)\right| d\bx\\
&\lesssim 1 + \ep|\Omega|^{1/2}\|u^\ep(t)\|_{L^2(\Omega)} +
  \int_\Omega|u^\ep(\bx,t)|^2\left\lvert
      \ln\(\ep+|u^\ep(\bx,t)|\)\right\rvert d\bx \\
&\lesssim 1 +
  \int_\Omega|u^\ep(\bx,t)|^2\left\lvert
      \ln\(\ep+|u^\ep(\bx,t)|\)\right\rvert d\bx,\quad t\ge0,
\end{align*}
where we have used Cauchy-Schwarz inequality and the conservation of
the mass $M^\ep(t)$. Writing, for $0<\eta\ll 1$,
\begin{align*}
  &\hspace{-3mm}\int_\Omega|u^\ep|^2\left\lvert
      \ln\(\ep+|u^\ep|\)\right\rvert d\bx \\
      & \lesssim \int_{\ep+|u^\ep|>1}
  |u^\ep|^2\(\ep+|u^\ep|\)^\eta d\bx+ \int_{\ep+|u^\ep|<1}
  |u^\ep|^2\(\ep+|u^\ep|\)^{-\eta}d\bx \\
&\lesssim \|u^\ep\|_{L^2(\Omega)} + \|u^\ep\|_{L^{2+\eta}(\Omega)}^{2+\eta} +
  \|u^\ep\|_{L^{2-\eta}(\Omega)}^{2-\eta}\lesssim 1 + \|\nabla u^\ep\|_{L^2(\Omega)}^{d\eta/2},
\end{align*}
where we have used the interpolation inequality (see e.g. \cite{Nir59})
\begin{equation*}
  \|u\|_{L^p(\Omega)}\lesssim \|u\|_{L^2(\Omega)}^{1-\alpha}\|\nabla
  u\|_{L^2(\Omega)}^{\alpha}+\|u\|_{L^2(\Omega)},\quad p=\frac{2d}{d-2\alpha},\ 0\le \alpha<1,
\end{equation*}
we obtain again that $u^\ep$ is bounded in $L^\infty(0,T;H^1(\Omega))$,
uniformly in $\ep>0$, for any given $T>0$.
\smallbreak

In the case where $\Omega$ is bounded, compactness arguments show that
$u^\ep$ converges to a solution $u$ to \eqref{LSE}; see
\cite{CazCourant,CaHa80}. When $\Omega=\R^d$, compactness in space is
provided by multiplying \eqref{RLSE} with~$ \langle  \bx \rangle^{2\alpha}
\overline {u^\ep}$ and integrating in space:
$$
\frac d{dt} \|u^\ep\|_{L^2_\alpha}^2 = 4\alpha\,\mathrm{Im}\int  \frac{\bx
  \cdot \nabla  u^{\ep}  }{  \langle \bx \rangle^{2-2\alpha}} \,
\overline {u^\ep}(t)  \, d\bx \lesssim
\|\<\bx\>^{2\alpha-1}u^\ep\|_{L^2(\Omega)}\|\nabla u^\ep\|_{L^2(\Omega)},
$$
where we have used Cauchy-Schwarz inequality. Recalling that
$0<\alpha\le 1$,
\begin{equation*}
  \|\<\bx\>^{2\alpha-1}u^\ep\|_{L^2(\Omega)}\le
  \|\<\bx\>^{\alpha}u^\ep\|_{L^2(\Omega)}=\|u^\ep\|_{L^2_\alpha},
\end{equation*}
and we obtain a bound for $u^\ep$ in $L^\infty(0,T;H^1(\Omega)\cap
L^2_\alpha)$ which is uniform in $\ep$. Uniqueness of such a solution
for \eqref{LSE} follows from the arguments of \cite{CaHa80}, involving
a specific algebraic inequality, generalized in
Lemma~\ref{pre} below. Note that at this stage, we know that $u^\ep$
converges to $u$ by compactness arguments, so we have no convergence
estimate. Such estimates are established in
Subsection~\ref{sec:cvmodel}.
 \smallbreak

To prove the propagation of the $H^2$ regularity, we note that
differentiating twice the nonlinearity in \eqref{RLSE} makes it
unrealistic to expect direct bounds which are uniform in $\ep$. To
overcome this difficulty, the argument proposed in \cite{CaGa-p}
relies on Kato's idea: instead of differentiating the equation twice in
space, differentiate it once in time, and use the equation to infer
$H^2$ regularity. This yields the second part of the theorem.
\smallbreak

To establish the last part of the theorem, we prove that $u\in
L_{\mathrm{loc}}^\infty(\mathbb{R}; L^2_2)$ and
the same approach applies to $u^\ep$.
It follows from \eqref{LSE} that
\begin{align}
\fl{d}{dt}\|u(t)\|_{L^2_2}^2&=-2\,\mathrm{Im}
\int_{\mathbb{R}^d}\langle \bx\rangle^4\overline{u(\bx,t)}\Delta u(\bx,t)d\bx\nn\\
&=8\,\mathrm{Im}\int_{\mathbb{R}^d}\langle \bx\rangle^2\overline{u(\bx,t)}\,\bx\cdot\nabla u (\bx,t)d\bx
\le 8\,\|u(t)\|_{L^2_2} \|\bx\cdot \nabla u(t)\|_{L^2(\mathbb{R}^d)}.\label{2ndm}
\end{align}
By Cauchy-Schwarz inequality and integration by parts, we have
\begin{align*}
&\hspace{-3mm}\|\bx\cdot \nabla u(t)\|_{L^2(\mathbb{R}^d)}^2\\
&\le \sum\limits_{j=1}^d\sum\limits_{k=1}^d \int_{\mathbb{R}^d}
x_j^2\fl{\p u(\bx,t)}{\p x_k}\fl{\p \overline{u(\bx,t)}}{\p x_k}d\bx\\
&=-2\int_{\mathbb{R}^d}\overline{u(\bx,t)}\,\bx\cdot \nabla u(\bx,t)d\bx-\int_{\mathbb{R}^d}|\bx|^2\overline{u(\bx,t)}\Delta u(\bx,t)d\bx\\
&\le \fl{1}{2}\|\bx\cdot \nabla u(t)\|_{L^2(\mathbb{R}^d)}^2+ 2\|u(t)\|_{L^2(\mathbb{R}^d)}^2
+\fl{1}{2}\|u(t)\|_{L^2_2}^2+\fl{1}{2}
\|\Delta u(t)\|_{L^2(\mathbb{R}^d)}^2,
\end{align*}
which yields directly that
\[\|\bx\cdot \nabla u(t)\|_{L^2(\mathbb{R}^d)}\le
  2\|u(t)\|_{L^2(\mathbb{R}^d)}+\|u(t)\|_{L^2_2}+\|\Delta
  u(t)\|_{L^2(\mathbb{R}^d)}.\]
This together with \eqref{2ndm} gives that
\begin{equation*}
\fl{d}{dt}\|u(t)\|_{L^2_2}\le 4\|\bx\cdot \nabla u(t)\|_{L^2(\mathbb{R}^d)}
\le 4\|u(t)\|_{L^2_2}+8\|u(t)\|_{L^2(\mathbb{R}^d)}+4\|\Delta u(t)\|_{L^2(\mathbb{R}^d)}.
\end{equation*}
Since we already know that  $u\in L_{\mathrm{loc}}^\infty(\mathbb{R};
H^2(\mathbb{R}^d))$, Gronwall lemma completes the proof.
\end{proof}

\begin{remark}\label{rem:Hk}
  We emphasize that if $u_0\in H^k(\R^d)$, $k\ge
3$, we cannot guarantee in general that this higher regularity is
propagated in \eqref{LSE}, due to the singularities stemming from the
logarithm. Still, this property is fulfilled in the case where
$u_0$ is Gaussian, since then $u$ remains Gaussian for all
time. However, our numerical tests, in the case where
  the initial datum is chosen as  the dark soliton of the cubic Schr\"odinger
  equation  multiplied by a Gaussian, suggest that even the $H^3$ regularity is not propagated in general.
\end{remark}
\subsection{Convergence of the regularized model}
\label{sec:cvmodel}
In this subsection, we show the approximation property of the
regularized model \eqref{RLSE} to \eqref{LSE}.

\subsubsection{A general estimate}
We prove:
\begin{lemma}\label{thmcon}
Suppose the equation is set on $\Omega$, where $\Omega=\mathbb{R}^d$,
or $\Omega\subset \mathbb{R}^d$ is a bounded domain with homogeneous
Dirichlet or periodic boundary condition, then we have
the general estimate:
\be\label{gene}
\fl{d}{dt}\|u^\ep(t)-u(t)\|_{L^2(\Omega)}^2\le 4|\lambda|\left(\|u^\ep(t)-u(t)\|_{L^2(\Omega)}^2+
\ep\|u^\ep(t)-u(t)\|_{L^1(\Omega)}\right).
\ee
\end{lemma}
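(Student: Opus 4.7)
The plan is to perform a standard $L^2$ energy estimate on the difference $w:=u^\ep - u$. Subtracting \eqref{LSE} from \eqref{RLSE} yields
\[
i\partial_t w + \Delta w = \lambda\bigl[u^\ep\ln(\ep+|u^\ep|)^2 - u\ln|u|^2\bigr].
\]
Multiplying by $\overline{w}$, integrating over $\Omega$, and taking imaginary parts, the $\partial_t$ term produces $\frac{1}{2}\frac{d}{dt}\|w\|_{L^2(\Omega)}^2$, whereas the Laplacian contribution vanishes after integration by parts: the boundary terms die in each of the three admissible settings (decay at infinity when $\Omega=\R^d$, periodicity, or homogeneous Dirichlet conditions). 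Everything therefore reduces to bounding the imaginary part of the nonlinear term.

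I would split the integrand as
\[
u^\ep\ln(\ep+|u^\ep|)^2 - u\ln|u|^2 = u^\ep\bigl[\ln(\ep+|u^\ep|)^2 - \ln|u^\ep|^2\bigr] + \bigl[u^\ep\ln|u^\ep|^2 - u\ln|u|^2\bigr],
\]
and treat each piece separately. For the first bracket, rewrite $\ln(\ep+|u^\ep|)^2 - \ln|u^\ep|^2 = 2\ln\bigl(1+\ep/|u^\ep|\bigr)$ and apply the elementary inequality $\ln(1+x)\le x$, which gives the pointwise estimate $2|u^\ep|\ln(1+\ep/|u^\ep|)\le 2\ep$ (with the obvious continuous extension by zero at $u^\ep=0$). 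Pairing with $\overline{w}$ and integrating produces a contribution bounded by $2\ep\|w\|_{L^1(\Omega)}$, which accounts for the $L^1$ term in \eqref{gene}.

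The main obstacle lies in the second bracket, where the non-Lipschitz behavior of $z\mapsto z\ln|z|^2$ near the origin must be absorbed. For this I would invoke the Cazenave--Haraux algebraic inequality
\[
\bigl|\mathrm{Im}\bigl[(z_1\ln|z_1|^2 - z_2\ln|z_2|^2)\,\overline{(z_1-z_2)}\bigr]\bigr|\le 2|z_1-z_2|^2,\quad z_1,z_2\in\C,
\]
which is precisely the statement generalized as Lemma~\ref{pre} below. Applied pointwise with $z_1 = u^\ep(\bx,t)$ and $z_2 = u(\bx,t)$ and integrated over $\Omega$, it yields the contribution $2\|w\|_{L^2(\Omega)}^2$. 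Collecting the two bounds and multiplying by the prefactor $2|\lambda|$ inherited from the energy identity gives exactly \eqref{gene}. Apart from this algebraic inequality, every step is routine; integrability of the various pieces is a non-issue on bounded $\Omega$, and on $\R^d$ the estimate is read as an inequality between (possibly infinite) nonnegative quantities.
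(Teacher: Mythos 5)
Your proposal is correct and follows essentially the same route as the paper: an $L^2$ energy estimate reducing everything to the nonlinear term, the elementary bound $\ln(1+x)\le x$ for the $\ep$-shift, and the Cazenave--Haraux algebraic inequality (the paper's Lemma~\ref{pre}) for the non-Lipschitz part. The only difference is a mirror-image decomposition: the paper groups $u^\ep\ln(\ep+|u^\ep|)-u\ln(\ep+|u|)$ and applies Lemma~\ref{pre} with the same $\ep>0$, leaving the residual $u[\ln(\ep+|u|)-\ln|u|]$, whereas you apply the $\ep=0$ version and leave the residual on $u^\ep$; both land on the same constants in \eqref{gene}.
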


Before giving the proof of Lemma \ref{thmcon}, we introduce the
following lemma, which is a variant of
\cite[Lemma~9.3.5]{CazCourant}, established initially in \cite[Lemme~1.1.1]{CaHa80}.
\begin{lemma}\label{pre}
Let $\ep\ge 0$ and denote
$f_\ep(z)=z\ln(\ep+|z|)$, then we have
\[\left|\mathrm{Im}\left(\(f_\ep(z_1)-f_\ep(z_2)\)
(\overline{z_1}-\overline{z_2})\right)\right|\le
  |z_1-z_2|^2,  \quad z_1, z_2\in\mathbb{C}. \]
\end{lemma}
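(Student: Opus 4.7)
My plan is to reduce the inequality to a short chain of elementary estimates after expanding the product and extracting the imaginary part explicitly. First, I would write
\[
(f_\ep(z_1)-f_\ep(z_2))(\overline{z_1}-\overline{z_2}) = |z_1|^2\ln(\ep+|z_1|)+|z_2|^2\ln(\ep+|z_2|)-z_1\overline{z_2}\ln(\ep+|z_1|)-z_2\overline{z_1}\ln(\ep+|z_2|).
\]
The first two summands are real and drop out of the imaginary part, while $\mathrm{Im}(z_2\overline{z_1})=-\mathrm{Im}(z_1\overline{z_2})$, so the identity collapses to
\[
\mathrm{Im}\bigl((f_\ep(z_1)-f_\ep(z_2))(\overline{z_1}-\overline{z_2})\bigr) = \bigl[\ln(\ep+|z_2|)-\ln(\ep+|z_1|)\bigr]\,\mathrm{Im}(z_1\overline{z_2}).
\]

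Next I would bound the two factors separately. Assuming without loss of generality $|z_1|\ge |z_2|$ (the degenerate case $\ep=0=|z_2|$ being handled separately, since then $\mathrm{Im}(z_1\overline{z_2})=0$), I write
\[
0\le \ln(\ep+|z_1|)-\ln(\ep+|z_2|) = \ln\!\left(1+\frac{|z_1|-|z_2|}{\ep+|z_2|}\right)\le \frac{|z_1|-|z_2|}{\ep+|z_2|}\le \frac{|z_1-z_2|}{\ep+|z_2|},
\]
using $\ln(1+x)\le x$ and the reverse triangle inequality. For the imaginary-part factor, subtracting the real quantity $|z_2|^2$ gives
\[
\mathrm{Im}(z_1\overline{z_2}) = \mathrm{Im}\bigl((z_1-z_2)\overline{z_2}\bigr),\qquad |\mathrm{Im}(z_1\overline{z_2})|\le |z_2|\,|z_1-z_2|.
\]

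Multiplying the two bounds yields
\[
\bigl|\mathrm{Im}\bigl((f_\ep(z_1)-f_\ep(z_2))(\overline{z_1}-\overline{z_2})\bigr)\bigr|\le \frac{|z_2|}{\ep+|z_2|}\,|z_1-z_2|^2\le |z_1-z_2|^2,
\]
which is the required inequality. The only subtle point is the separation of real and imaginary parts in the first step and the use of $z_1\overline{z_2}=(z_1-z_2)\overline{z_2}+|z_2|^2$ to absorb the potentially large factor $|z_2|$ against $|z_1-z_2|$; the rest is standard. I would also remark that the bound is uniform in $\ep\ge 0$, which is precisely what is needed for both the uniqueness argument for \eqref{LSE} invoked above and the proof of Lemma~\ref{thmcon} that follows.
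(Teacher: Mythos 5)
Your proof is correct and follows essentially the same route as the paper's: both reduce the imaginary part to $\bigl[\ln(\ep+|z_2|)-\ln(\ep+|z_1|)\bigr]\mathrm{Im}(z_1\overline{z_2})$, bound the logarithmic difference by $\ln(1+x)\le x$ together with the reverse triangle inequality, and bound $|\mathrm{Im}(z_1\overline{z_2})|$ by $|z_2|\,|z_1-z_2|$. Your version of retaining the factor $|z_2|/(\ep+|z_2|)\le 1$ rather than cancelling $|z_2|$ outright is a marginally cleaner way to dispose of the degenerate case, but the argument is the same.
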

\begin{proof}
Notice that
\[\mathrm{Im}\left[\(f_\ep(z_1)-f_\ep(z_2)\)(\overline{z_1}-\overline{z_2})\right]=\fl{1}{2}
\left[\ln(\ep+|z_1|)-\ln(\ep+|z_2|)\right]\mathrm{Im}(\overline{z_1}z_2-z_1\overline{z_2}).\]
Supposing, for example, $0<|z_2|\le|z_1|$, we can obtain that
\[\left|\ln(\ep+|z_1|)-\ln(\ep+|z_2|)\right|=\ln\left(1+\fl{|z_1|-|z_2|}{\ep+|z_2|}\right)\le\fl{|z_1|-|z_2|}
{\ep+|z_2|}\le \fl{|z_1-z_2|}{|z_2|},\]
and
\[\left|\mathrm{Im}(\overline{z_1}z_2-z_1\overline{z_2})\right|=
\left|z_2(\overline{z_1}-\overline{z_2})+\overline{z_2}(z_2-z_1))\right|\le 2|z_2|\,|z_1-z_2|.\]
Otherwise the result follows by exchanging $z_1$ and $z_2$.
\end{proof}

\begin{proof}(Proof of Lemma \ref{thmcon})
Subtracting \eqref{LSE} from \eqref{RLSE}, we see that the error function $e^\ep:=u^\ep-u$ satisfies
\[
i\p_t e^\ep+\Delta e^\ep=\lambda \left[u^\ep\ln(\ep+|u^\ep|)^2-u\ln(|u|^2)\right].
\]
Multiplying the error equation by $\overline{e^\ep(t)}$, integrating in space and taking the imaginary parts, we can get by using Lemma \ref{pre} that
\begin{align*}
\fl{1}{2}\fl{d}{dt}\|e^\ep(t)\|_{L^2(\Omega)}^2&=2\lambda\, \mathrm{Im} \int_{\Omega}
\left[u^\ep\ln(\ep+|u^\ep|)-u\ln(|u|)\right](\overline{u^\ep}-\overline{u})(\bx,t)d\bx\\
&\le 2 |\lambda|\|e^\ep(t)\|_{L^2(\Omega)}^2+2|\lambda|\left|\int_{\Omega}\overline{e^\ep}u
\left[\ln(\ep+|u|)-\ln(|u|)\right](\bx,t)d\bx\right|\\
&\le 2|\lambda|\|e^\ep(t)\|_{L^2(\Omega)}^2+2\ep|\lambda|\|e^\ep(t)\|_{L^1(\Omega)},
\end{align*}
where we have used the general estimate $0\le\ln (1+|x|)\le |x|$.
\end{proof}
\subsubsection{Convergence for bounded domain}
If $\Omega$ has finite measure, then we can have the following convergence behavior.
\begin{proposition}\label{prop1}
Assume that $\Omega$ has finite measure, and let $u_0\in
H^2(\Omega)$.
For any $T>0$, we have
\be
\label{Conv_RLSE}
\|u^\ep-u\|_{L^\infty(0,T; L^2(\Omega))}\le C_1\ep,\quad
\|u^\ep-u\|_{L^\infty(0,T; H^1(\Omega))}\le C_2\ep^{1/2},
\ee
where $C_1$ depends on $|\lambda|$, $T$, $|\Omega|$ and $C_2$ depends on $|\lambda|$, $T$, $|\Omega|$ and $\|u_0\|_{H^2(\Omega)}$.
\end{proposition}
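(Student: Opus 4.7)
The plan is a three-step argument: derive the $L^2$ bound directly from Lemma~\ref{thmcon}, then combine it with an $\ep$-uniform $H^2$ bound on $u^\ep$ via the interpolation inequality $\|\nabla f\|_{L^2}^2\le \|f\|_{L^2}\|\Delta f\|_{L^2}$.

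\emph{Step 1 (the $L^2$ estimate).} Since $|\Omega|<\infty$, Cauchy--Schwarz gives $\|e^\ep(t)\|_{L^1(\Omega)}\le |\Omega|^{1/2}\|e^\ep(t)\|_{L^2(\Omega)}$, so Lemma~\ref{thmcon} yields
\[
\frac{d}{dt}\|e^\ep(t)\|_{L^2(\Omega)}^2 \le 4|\lambda|\|e^\ep(t)\|_{L^2(\Omega)}^2 + 4|\lambda|\ep|\Omega|^{1/2}\|e^\ep(t)\|_{L^2(\Omega)}.
\]
Writing $y(t)=\|e^\ep(t)\|_{L^2(\Omega)}$, $y(0)=0$, this reduces to the linear differential inequality $y'(t)\le 2|\lambda|\bigl(y(t)+\ep|\Omega|^{1/2}\bigr)$, and Gronwall's lemma gives $y(t)\le \ep|\Omega|^{1/2}(e^{2|\lambda|t}-1)$, hence the first estimate with $C_1$ depending only on $|\lambda|$, $T$, $|\Omega|$.

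\emph{Step 2 (reduction via interpolation).} Using integration by parts (boundary contributions vanish for both the periodic and homogeneous Dirichlet settings, as $e^\ep|_{\partial\Omega}=0$),
\[
\|\nabla e^\ep(t)\|_{L^2(\Omega)}^2 = -\int_\Omega e^\ep(t)\,\overline{\Delta e^\ep(t)}\,d\bx \le \|e^\ep(t)\|_{L^2(\Omega)}\,\|\Delta e^\ep(t)\|_{L^2(\Omega)}.
\]
Combined with the triangle inequality $\|\Delta e^\ep\|_{L^2}\le \|\Delta u^\ep\|_{L^2}+\|\Delta u\|_{L^2}$, it suffices to produce an $\ep$-uniform bound on the $H^2$ norms of $u^\ep$ and $u$ on $[0,T]$; then Step~1 gives $\|\nabla e^\ep\|_{L^2}^2\lesssim \ep$, i.e., $\lesssim \ep^{1/2}$ for $\|\nabla e^\ep\|_{L^2}$. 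Together with Step~1, this delivers the second estimate with $C_2$ depending on $|\lambda|$, $T$, $|\Omega|$, and $\|u_0\|_{H^2}$.

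\emph{Step 3 (the uniform $H^2$ bound).} By Theorem~\ref{theo:cauchy}, $u\in L^\infty_{\mathrm{loc}}(\mathbb{R};H^2)$ with bound depending only on $\|u_0\|_{H^2}$. For $u^\ep$ I follow Kato's trick evoked there: set $v^\ep=\partial_t u^\ep$ and test the time-differentiated RLogSE against $\overline{v^\ep}$. Expanding
\[
\partial_t\!\bigl[u^\ep\ln(\ep+|u^\ep|)^2\bigr]=v^\ep\ln(\ep+|u^\ep|)^2+\frac{2u^\ep\,\mathrm{Re}(\overline{u^\ep}v^\ep)}{(\ep+|u^\ep|)|u^\ep|},
\]
the first term is pointwise real and drops out of the imaginary part, while the second is controlled by $|u^\ep|/(\ep+|u^\ep|)\le 1$ so its imaginary part is pointwise $\lesssim |v^\ep|^2$. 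This produces the \emph{$\ep$-uniform} estimate $\tfrac{d}{dt}\|v^\ep\|_{L^2}^2\le 2|\lambda|\|v^\ep\|_{L^2}^2$. At $t=0$, $v^\ep(0)=i\Delta u_0-i\lambda u_0\ln(\ep+|u_0|)^2$ is bounded in $L^2$ uniformly in $\ep\in(0,1]$ because $u_0\in H^2\hookrightarrow L^\infty$ in $d\le 3$ and the elementary calculus inequality $|x\ln(\ep+x)|\le C$ holds for $x\in[0,\|u_0\|_{L^\infty}]$. Gronwall gives $\|v^\ep(t)\|_{L^2}\lesssim 1$ on $[0,T]$. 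Reading $\Delta u^\ep = -iv^\ep+\lambda u^\ep\ln(\ep+|u^\ep|)^2$ off of \eqref{RLSE} and invoking the uniform $H^1$-bound from Theorem~\ref{theo:cauchy} together with Sobolev embedding to control $\|u^\ep\ln(\ep+|u^\ep|)^2\|_{L^2}$ uniformly in $\ep$, we obtain $\|\Delta u^\ep\|_{L^2}\lesssim 1$ on $[0,T]$, completing the proof.

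The principal obstacle is Step~3: a naive energy estimate on $v^\ep$ produces a factor $\|\ln(\ep+|u^\ep|)^2\|_{L^\infty}\sim|\ln\ep|$, which would wreck uniformity. The clean cancellation above—the log term being real and thus invisible to the imaginary-part test—is the non-trivial ingredient, and it is precisely what allows the convergence rate $\ep^{1/2}$ in $H^1$ rather than an $\ep^{1/2}|\ln\ep|^{\alpha}$ surrogate.
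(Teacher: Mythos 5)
Your proof is correct and follows essentially the same route as the paper: the $L^2$ bound comes from Lemma~\ref{thmcon} plus $\|e^\ep\|_{L^1}\le|\Omega|^{1/2}\|e^\ep\|_{L^2}$ and Gronwall, and the $H^1$ bound then follows by interpolating ($\|\nabla e^\ep\|_{L^2}^2\le\|e^\ep\|_{L^2}\|\Delta e^\ep\|_{L^2}$) against an $\ep$-uniform $H^2$ bound. The only difference is that your Step~3 writes out the Kato-type time-differentiation argument for that uniform $H^2$ bound, whereas the paper simply invokes it as the second assertion of Theorem~\ref{theo:cauchy}.
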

\begin{proof}
Note that $\|e^\ep(t)\|_{L^1(\Omega)}\le |\Omega|^{1/2}\|e^\ep(t)\|_{L^2(\Omega)}$, then it follows from \eqref{gene} that
\[\fl{d}{dt}\|e^\ep(t)\|_{L^2(\Omega)}\le 2|\lambda|\|e^\ep(t)\|_{L^2(\Omega)}+2\ep|\lambda| |\Omega|^{1/2}.\]
Applying Gronwall's inequality, we immediately get that
\[\|e^\ep(t)\|_{L^2(\Omega)}\le \left(\|e^\ep(0)\|_{L^2(\Omega)}+\ep |\Omega|^{1/2}\right)e^{2|\lambda|t}=\ep |\Omega|^{1/2}e^{2|\lambda|t}.\]
The convergence rate in $H^1$ follows from the property
$u^\ep, u\in L_{\mathrm{loc}}^\infty(\mathbb{R}; H^2(\Omega))$
and the Gagliardo-Nirenberg inequality \cite{leoni2017},
\[\|\nabla v\|_{L^2(\Omega)}\lesssim \|v\|_{L^2(\Omega)}^{1/2}\|\Delta v\|_{L^2(\Omega)}^{1/2},\]
which completes the proof.
\end{proof}
\begin{remark}
  The weaker rate in the $H^1$ estimate is due to the fact that
  Lemma~\ref{thmcon} is not easily adapted to $H^1$ estimates, because
  of the presence of the logarithm. Differentiating \eqref{LSE} and
  \eqref{RLSE} makes it hard  to obtain the analogue in
  Lemma~\ref{thmcon}. This is why we bypass this difficulty by
  invoking boundedness in $H^2$ and interpolating with the  error bound
  at the $L^2$ level. If we have $u^\ep$, $u\in L^\infty_{\mathrm{loc}}(\mathbb{R};
H^k(\Omega))$ for $k>2$, then the convergence rate in
$H^1(\Omega)$ can be improved as
\[\| e^\ep\|_{L^\infty(0,T; H^1(\Omega))}\lesssim \ep^{\fl{k-1}{k}},\]
by using the inequality (see e.g. \cite{Nir59}):
\[\| v\|_{H^1(\Omega)}\lesssim \|v\|_{L^2(\Omega)}^{1-1/k}\|
  v\|_{H^k(\Omega)}^{1/k}.\]
\end{remark}

\subsubsection{Convergence for the whole space}\label{sec:whole}
In order to prove the convergence rate of the regularized model \eqref{RLSE} to \eqref{LSE} for the whole space, we need the following lemma.
\begin{lemma}\label{lem:localisation}
For $d=1, 2, 3$, if $v\in L^2(\mathbb{R}^d)\cap L^2_2$, then we have
\be\label{mod}
\|v\|_{L^1(\mathbb{R}^d)}\le C \|v\|_{L^2(\mathbb{R}^d)}^{1-d/4}\|v\|_{L^2_2}^{d/4},
\ee
where $C>0$ depends on $d$.
\end{lemma}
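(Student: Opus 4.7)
The plan is to prove the inequality by a standard splitting of $\mathbb{R}^d$ into a ball $B_R = \{|\bx| \le R\}$ and its complement, optimizing over the radius $R>0$. This kind of interpolation between an unweighted $L^2$ bound and a weighted one is exactly the situation in which the restriction $d<4$ appears naturally, since the weight $\langle \bx \rangle^{-4}$ is integrable at infinity on $\mathbb{R}^d$ precisely when $d \le 3$.

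First I would write $\|v\|_{L^1(\mathbb{R}^d)} = \int_{B_R} |v|\,d\bx + \int_{\mathbb{R}^d\setminus B_R}|v|\,d\bx$. For the inner piece, Cauchy--Schwarz gives
\begin{equation*}
  \int_{B_R}|v|\,d\bx \le |B_R|^{1/2}\|v\|_{L^2(\mathbb{R}^d)} \lesssim R^{d/2}\|v\|_{L^2(\mathbb{R}^d)}.
\end{equation*}
For the outer piece, I would insert the weight $\langle \bx\rangle^{\pm 2}$ and apply Cauchy--Schwarz again to obtain
\begin{equation*}
  \int_{|\bx|>R}|v|\,d\bx \le \left(\int_{|\bx|>R}\langle \bx\rangle^{-4}\,d\bx\right)^{1/2}\|v\|_{L^2_2} \lesssim R^{(d-4)/2}\|v\|_{L^2_2},
\end{equation*}
where the last estimate comes from a direct radial computation and uses $d<4$.

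It remains to balance the two contributions. Equating $R^{d/2}\|v\|_{L^2(\mathbb{R}^d)} = R^{(d-4)/2}\|v\|_{L^2_2}$ suggests choosing $R = \bigl(\|v\|_{L^2_2}/\|v\|_{L^2(\mathbb{R}^d)}\bigr)^{1/2}$ (assuming $v\not\equiv 0$, the trivial case being obvious). Plugging in yields
\begin{equation*}
  \|v\|_{L^1(\mathbb{R}^d)} \lesssim \|v\|_{L^2(\mathbb{R}^d)}^{1-d/4}\|v\|_{L^2_2}^{d/4},
\end{equation*}
which is the desired estimate. The only real obstacle is the bookkeeping of constants; there is no conceptual difficulty, but one should be careful with the case $\|v\|_{L^2_2}=0$ (then $v\equiv 0$) and note that the argument breaks down for $d\ge 4$ because $\langle \bx\rangle^{-4}$ is no longer integrable at infinity, which is why the statement is restricted to $d=1,2,3$.
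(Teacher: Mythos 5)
Your proof is correct and follows essentially the same route as the paper: split $\mathbb{R}^d$ at radius $R$, apply Cauchy--Schwarz to each piece (using the weight on the exterior region), and optimize by taking $R=\bigl(\|v\|_{L^2_2}/\|v\|_{L^2(\mathbb{R}^d)}\bigr)^{1/2}$. The only cosmetic difference is that the paper inserts the weight $|\bx|^{\pm2}$ rather than $\langle\bx\rangle^{\pm2}$ on the outer region, which changes nothing since $R^{(d-4)/2}=R^{d/2-2}$ and $|\bx|^2\le\langle\bx\rangle^2$.
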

\begin{proof}
Applying the Cauchy-Schwarz inequality, we can get for fixed $r>0$,
\begin{align*}
\|v\|_{L^1(\mathbb{R}^d)}&=\int_{|\bx|\le r} |v(\bx)| d\bx
                           +\int_{|\bx|\ge r}
                           \fl{|\bx|^{2}|v(\bx)|}{|\bx|^{2}}d\bx\\
&\lesssim  r^{d/2}\left(\int_{|\bx|\le r} |v(\bx)|^2 d\bx\right)^{\fl{1}{2}} +\left(\int_{|\bx|\ge r} |\bx|^4|v(\bx)|^2d\bx\right)^{\fl{1}{2}}
\left(\int_{|\bx|\ge r} \fl{1}{|\bx|^4}d\bx\right)^{\fl{1}{2}}\\
&\lesssim r^{d/2}\|v\|_{L^2(\mathbb{R}^d)}+r^{d/2-2}
\|v\|_{L^2_2}.
\end{align*}
Then \eqref{mod} can be obtained by setting
$r=\(\|v\|_{L^2_2}/\|v\|_{L^2(\mathbb{R}^d)}\)^{1/2}$.
\end{proof}

\bigskip

\begin{proposition}
Assume that $\Omega=\R^d$, $1\le d\le 3$, and let $u_0\in
H^2(\R^d)\cap L^2_2$.
For any $T>0$, we have
\[\|u^\ep-u\|_{L^\infty(0,T; L^2(\mathbb{R}^d))}\le C_1\ep^{\fl{4}{4+d}},\quad
\|u^\ep-u\|_{L^\infty(0,T; H^1(\mathbb{R}^d)))}\le C_2\ep^{\fl{2}{4+d}},\]
where $C_1$ depends on $d$, $|\lambda|$, $T$, $\|u_0\|_{L^2_2}$ and $C_2$ depends on additional $\|u_0\|_{H^2(\mathbb{R}^d)}$.
\end{proposition}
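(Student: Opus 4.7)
The plan is to bootstrap Lemma~\ref{thmcon} via Lemma~\ref{lem:localisation}, once we know that $u^\ep$ and $u$ are bounded in $L^2_2$ uniformly in $\ep$. By the last bullet of Theorem~\ref{theo:cauchy} (and tracing its proof, which shows the $L^2_2$ estimate via Gronwall with $\ep$-independent constants on $\|\Delta u^\ep\|_{L^2}$), both $u$ and $u^\ep$ are bounded in $L^\infty(0,T;H^2\cap L^2_2)$ uniformly in $\ep\in(0,1]$. In particular there is $K=K(d,|\lambda|,T,\|u_0\|_{H^2\cap L^2_2})$ with $\|u^\ep(t)-u(t)\|_{L^2_2}\le K$ for all $t\in[0,T]$.

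Setting $y(t):=\|u^\ep(t)-u(t)\|_{L^2(\R^d)}$, Lemma~\ref{lem:localisation} applied to $e^\ep=u^\ep-u$ yields
\begin{equation*}
\|e^\ep(t)\|_{L^1(\R^d)}\lesssim y(t)^{1-d/4}\,K^{d/4}.
\end{equation*}
Plugging this into the general estimate \eqref{gene} of Lemma~\ref{thmcon} gives
\begin{equation*}
\frac{d}{dt}y^2\le 4|\lambda|y^2+C\,\ep\,y^{1-d/4},
\end{equation*}
where $C$ depends on $d,\lambda,K$. The next step is to split the $\ep$ and $y$ factors in the second term. By Young's inequality with conjugate exponents $p=8/(4+d)$ and $q=8/(4-d)$ (both positive for $1\le d\le 3$ and satisfying $q(1-d/4)=2$),
\begin{equation*}
\ep\,y^{1-d/4}\le \frac{1}{p}\ep^{8/(4+d)}+\frac{1}{q}y^2,
\end{equation*}
so that
\begin{equation*}
\frac{d}{dt}y^2\le C_1\,y^2+C_2\,\ep^{8/(4+d)}.
\end{equation*}
Since $y(0)=0$, Gronwall's inequality then yields $y(t)^2\lesssim \ep^{8/(4+d)}$ on $[0,T]$, i.e. the claimed $L^2$-rate $\ep^{4/(4+d)}$.

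For the $H^1$ estimate I would interpolate exactly as in the proof of Proposition~\ref{prop1}: since $e^\ep$ is bounded in $L^\infty(0,T;H^2(\R^d))$ uniformly in $\ep$, the Gagliardo--Nirenberg inequality $\|\nabla e^\ep\|_{L^2}\lesssim \|e^\ep\|_{L^2}^{1/2}\|\Delta e^\ep\|_{L^2}^{1/2}$ upgrades the $L^2$-rate to $\ep^{2/(4+d)}$ in $H^1$.

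The main obstacle is really the opening move: one needs the $\ep$-uniformity of the $L^2_2$ and $H^2$ bounds, which is not stated as an independent lemma but is embedded in the proof of Theorem~\ref{theo:cauchy} (Kato's trick for $H^2$, and the weighted estimate \eqref{2ndm} applied to $u^\ep$ for $L^2_2$). Once this uniform control is in hand, everything reduces to the Young-then-Gronwall scheme above; the choice of exponent $p=8/(4+d)$ is precisely what balances the nonlinear defect $\ep y^{1-d/4}$ against the linear term $y^2$, producing the optimal rate $\ep^{4/(4+d)}$.
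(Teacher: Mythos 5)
Your proposal is correct and follows essentially the same route as the paper: combine the general estimate \eqref{gene} with Lemma~\ref{lem:localisation}, split $\ep\|e^\ep\|_{L^1}$ by Young's inequality with exponents $8/(4+d)$ and $8/(4-d)$, apply Gronwall, and upgrade to $H^1$ by Gagliardo--Nirenberg interpolation using the uniform $H^2$ bound. The only (cosmetic) difference is that you absorb the uniform $L^2_2$ bound into a constant $K$ before applying Young, whereas the paper carries $\|e^\ep(t)\|_{L^2_2}^{2d/(4+d)}$ through the Gronwall step and invokes the uniform bound from Theorem~\ref{theo:cauchy} only at the end; your explicit emphasis that these $H^2\cap L^2_2$ bounds must be $\ep$-uniform is a point the paper leaves implicit.
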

\begin{proof}
Applying \eqref{mod} and the Young's inequality, we deduce that
\begin{equation*}
\ep\|e^{\ep}(t)\|_{L^1(\mathbb{R}^d)}\le  \ep C_d\|e^\ep(t)\|_{L^2(\mathbb{R}^d)}^{1-d/4}\|e^\ep(t)\|_{L^2_2}^{d/4}
\le C_d\left(\|e^\ep(t)\|_{L^2(\mathbb{R}^d)}^2+\ep^{\fl{8}{4+d}}\|e^\ep(t)\|_{L^2_2}^{\fl{2d}{4+d}}\right),
\end{equation*}
which together with \eqref{gene} gives that
\[\fl{d}{dt}\|e^\ep(t)\|_{L^2(\mathbb{R}^d)}^2\le 4|\lambda|(1+C_d)\|e^\ep(t)\|_{L^2(\mathbb{R}^d)}^2+4C_d|\lambda|
  \ep^{\fl{8}{4+d}}\|e^\ep(t)\|_{L^2_2}^{\fl{2d}{4+d}}.\]
 Gronwall lemma yields
\[\|e^\ep(t)\|_{L^2(\mathbb{R}^d)}\le \ep^{\fl{4}{4+d}}\|e^\ep(t)\|_{L^2_2}^{\fl{d}{4+d}}e^{tC_{d, |\lambda|}}.\]
The proposition follows by recalling that
$u^\ep$, $u\in L^\infty_{\mathrm{loc}}(\mathbb{R};
H^2(\mathbb{R}^d)\cap L^2_2)$.
\end{proof}

\begin{remark}
If we have $u^\ep, u\in L^\infty_{\mathrm{loc}}(\mathbb{R}; L^2_m)$
for $m>2$, then by applying the inequality
\[\ep\|v\|_{L^1(\mathbb{R}^d)}\lesssim\ep
  \|v\|_{L^2(\mathbb{R}^d)}^{1-\fl{d}{2m}}\|v\|_{L^2_m}^{\fl{d}{2m}}\lesssim
  \|v\|_{L^2(\mathbb{R}^d)}^2+\ep^{\fl{4m}{2m+d}}\|v\|_{L^2_m}^{\fl{2d}{2m+d}},\]
which can be proved like above,
the convergence rate can be improved as
\[\|u^\ep-u\|_{L^\infty(0,T; L^2(\mathbb{R}^d))}\lesssim \ep^{\fl{2m}{2m+d}}.\]
\end{remark}

\begin{remark}\label{rem:Hk2}
If in addition $u^\ep, u\in L^\infty_{\mathrm{loc}}(\mathbb{R}; H^s(\mathbb{R}^d))$ for $s>2$, then the convergence rate in $H^1(\mathbb{R}^d)$ can be improved as
\[\| e^\ep\|_{L^\infty(0,T; H^1(\mathbb{R}^d)))}\le C\ep^{\fl{2m}{2m+d}\fl{s-1}{s}},\]
by using the Gagliardo-Nirenberg inequality:
\[\|\nabla v\|_{L^2(\mathbb{R}^d)}\le C\|v\|_{L^2(\mathbb{R}^d)}^{1-1/s}\|\nabla^s v\|_{L^2(\mathbb{R}^d)}^{1/s}.\]
\end{remark}
The previous two remarks apply typically in the case of Gaussian
initial data.
\subsection{Convergence of the energy}
\label{sec:cvenergy}
In this subsection we will show the convergence of the energy
$E^\ep(u_0)\to E(u_0)$.
\begin{proposition}\label{energyc}
For $u_0\in H^1(\Omega)\cap L^1(\Omega)$,  the energy $E^\ep(u_0)$
converges to $E(u_0)$ with
$$|E^\ep(u_0)-E(u_0)|\le 4\,\ep|\lambda| \|u_0\|_{L^1(\Omega)}.$$
\end{proposition}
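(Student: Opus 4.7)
The plan is to reduce everything to a pointwise comparison between the densities $F_\ep(|u_0|^2)$ and $F(|u_0|^2)$. Since the gradient terms in $E^\ep(u_0)$ and $E(u_0)$ coincide,
\begin{equation*}
E^\ep(u_0)-E(u_0)=\lambda\int_\Omega\bigl[F_\ep(|u_0(\bx)|^2)-F(|u_0(\bx)|^2)\bigr]\,d\bx,
\end{equation*}
so the result will follow once we prove a pointwise bound of the form $|F_\ep(\rho)-F(\rho)|\le 4\ep\sqrt{\rho}$ for every $\rho\ge0$, and integrate.

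To establish the pointwise bound, I set $s=\sqrt{\rho}\ge 0$ and use the explicit expressions $F(s^2)=2s^2\ln s-s^2$ and $F_\ep(s^2)=2s^2\ln(\ep+s)-s^2+2\ep s-2\ep^2\ln(1+s/\ep)$, which gives
\begin{equation*}
F_\ep(s^2)-F(s^2)=2s^2\ln\!\Bigl(1+\frac{\ep}{s}\Bigr)+2\ep s-2\ep^2\ln\!\Bigl(1+\frac{s}{\ep}\Bigr).
\end{equation*}
The first step is to apply the elementary inequality $0\le \ln(1+x)\le x$ (for $x\ge0$) to the first and third terms. The first term is controlled by $2s^2\cdot(\ep/s)=2\ep s$, and the third term, being negative, satisfies $-2\ep^2\ln(1+s/\ep)\ge -2\ep^2\cdot(s/\ep)=-2\ep s$. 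Since each of the first two summands is nonnegative, this yields both $F_\ep(s^2)-F(s^2)\ge 0$ and $F_\ep(s^2)-F(s^2)\le 4\ep s$, so $|F_\ep(s^2)-F(s^2)|\le 4\ep s$. The degenerate case $s=0$ is handled by continuous extension (both $F$ and $F_\ep$ vanish there, as does the estimate).

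Plugging $s=|u_0(\bx)|$ into this pointwise bound and integrating yields
\begin{equation*}
|E^\ep(u_0)-E(u_0)|\le |\lambda|\int_\Omega 4\ep\,|u_0(\bx)|\,d\bx=4\ep|\lambda|\,\|u_0\|_{L^1(\Omega)},
\end{equation*}
which is the claimed inequality. There is no real obstacle here; the only mildly delicate point is making sure the pointwise estimate extends continuously through $s=0$, where $F$ involves the singular $\ln s$ and $F_\ep-F$ involves $\ln(1+\ep/s)$, both of which are controlled by the standard limit $s^2\ln s\to 0$ as $s\to 0^+$.
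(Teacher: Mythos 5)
Your proof is correct and follows essentially the same route as the paper's: both expand $F_\ep(\rho)-F(\rho)$ explicitly and control the two logarithmic terms via $0\le\ln(1+x)\le x$, arriving at the same constant $4\ep|\lambda|\|u_0\|_{L^1(\Omega)}$. The only cosmetic difference is that you state the bound pointwise before integrating, while the paper bounds the integrated expression directly.
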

\begin{proof} It can be deduced from the definition that
\begin{align*}
\left|E^\ep(u_0)-E(u_0)\right|&=2|\lambda|\left|\ep \|u_0\|_{L^1(\Omega)}+\int_{\Omega} |u_0(\bx)|^2 \left[\ln(\ep+|u_0(\bx)|-\ln(|u_0(\bx)|)\right]d\bx\right.\\
&\quad\left.-\ep^2
\int_{\Omega}\ln\left(1+|u_0(\bx)|/\ep\right)d\bx\right|\\
&\le 4\,\ep|\lambda| \|u_0\|_{L^1(\Omega)},
\end{align*}
which completes the proof.
\end{proof}

\begin{remark}
  If $\Omega$ is bounded, then $H^1(\Omega)\subseteq L^1(\Omega)$. If
$\Omega=\R^d$, then Lemma~\ref{lem:localisation} (and its natural
generalizations) shows that $H^1(\R)\cap L^2_1\subseteq L^1(\R)$, and if
$d=2,3$, $ H^1(\R^d)\cap L^2_2\subseteq L^1(\R^d)$.
\end{remark}

\begin{remark}
This regularization is reminiscent of the one considered in
\cite{CaGa-p} in order to prove (by compactness arguments) that
\eqref{LSE} has a solution,
\begin{equation}
\label{RLogSE_2}
  i\p_t u^\ep(\bx,t)+\Delta u^\ep(\bx,t)=\lambda
  u^\ep(\bx,t)\,\ln\(\ep+|u^\ep(\bx,t)|^2\),\quad \bx\in \Omega, \quad
t>0.
\end{equation}
With that regularization, it is easy to adapt the error estimates
established above for \eqref{RLSE}. Essentially, $\ep$ must be
replaced by $\sqrt\ep$ (in Lemma~\ref{thmcon}, and hence in its corollaries).
\end{remark}
\section{A regularized semi-implicit finite difference method}
\label{sec:scheme}
In this section, we study the approximation properties of a finite difference method
for solving the regularized model \eqref{RLSE}. For simplicity of
notation, we set $\lambda=1$ and only present the numerical method for
the  RLogSE \eqref{RLSE} in 1D, as  extensions to higher dimensions are
straightforward. When $d=1$, we truncate the RLogSE on a bounded
computational interval $\Og=(a,b)$ with homogeneous Dirichlet boundary
condition (here $|a|$ and $b$ are chosen large enough such that the
truncation error is negligible):
\be\label{RLSE1d}
\left\{
\begin{aligned}
&i\p_t u^\ep(x,t)+\p_{xx} u^\ep(x,t)=u^\ep(x,t)\,\ln(\ep+|u^\ep(x,t)|)^2,
\quad x \in \Omega, \quad t>0,\\
&u^\ep(x,0)=u_0(x),\quad x\in\overline\Omega;\qquad u^\ep(a,t)=u^\ep(b,t)=0, \quad t\ge0,
\end{aligned}
\right.
\ee
\subsection{A finite difference scheme and main results on error bounds}
Choose a mesh size $h:=\Dt x=(b-a)/M$ with $M$ being a positive integer and
a time step $\tau:=\Dt t>0$ and denote the grid points and time steps as
$$x_j:=a+jh,\quad j=0,1,\cdots,M;\quad t_k:=k\tau,\quad k=0,1,2,\dots$$
Define the index sets
$$\mathcal {T}_M=\{j \ | \ j=1,2,\cdots,M-1\},\quad
\mathcal{T}_M^0=\{j\ |\ j=0,1,\cdots,M\}.$$
Let $u^{\ep,k}_j$  be the approximation of $u^{\ep}(x_j,t_k)$, and
denote $u^{\ep,k}=(u^{\ep,k}_0, u^{\ep,k}_1, \ldots, u^{\ep,k}_M)^T\in
\mathbb{C}^{M+1}$ as
the numerical solution vector at $t=t_k$. Define the standard finite difference operators
\[
\dt_t^c u_j^k=\fl{u_j^{k+1}-u_j^{k-1}}{2\tau},\quad
\dt_x^+u_j^k=\fl{u_{j+1}^k-u_j^k}{h},\quad
\dt_x^2u_j^k=\fl{u_{j+1}^k-2u_j^k+u_{j-1}^k}{h^2}.
\]
Denote
$$X_M=\left\{v=\left(v_0,v_1,\ldots,v_M\right)^T\  | \ v_0=v_M=0\right\} \subseteq \mathbb{C}^{M+1},$$
equipped with inner products and norms defined as (recall that
$u_0=v_0=u_M=v_M=0$ by Dirichlet boundary condition)
\be\label{innorm}
\begin{split}
&(u, v)=h\sum\limits_{j=1}^{M-1}u_j \overline{v_j}, \quad \<u,v\>=h\sum\limits_{j=0}^{M-1}u_j \overline{v_j}, \quad
\|u\|_\infty=\sup\limits_{j\in \mathcal{T}_M^0}|u_j|;\\
&\|u\|^2=(u, u),\quad |u|_{H^1}^2=\<\dt_x^+u, \dt_x^+u\>,
  \quad \|u\|_{H^1}^2=\|u\|^2+|u|_{H^1}^2.
\end{split}
\ee
Then we have for $u$, $v\in X_M$,
\be\label{innpX_M}
(-\dt_x^2 u,v)=\<\dt_x^+ u,\dt_x^+ v\>=(u,-\dt_x^2 v).
\ee

Consider a semi-implicit finite difference (SIFD) discretization of \eqref{RLSE1d} as following
\be\label{scheme}
i\dt_t^c u_j^{\ep,k}=-\fl{1}{2}\dt_x^2(u_j^{\ep,k+1}+u_j^{\ep,k-1})+u_j^{\ep,k}\ln(\ep+|u_j^{\ep,k}|)^2,
\quad j\in \mathcal{T}_M,\quad k\ge 1.
\ee
The boundary and initial conditions are discretized as
\be\label{ib}
u_0^{\ep,k}=u_M^{\ep,k}=0,\quad k\ge 0;\quad u_j^{\ep,0}=u_0(x_j),\quad j\in\mathcal{T}_M^0.
\ee
In addition, the first step $u_j^{\ep,1}$ can be obtained via the Taylor expansion as
\be\label{1stp}
u_j^{\ep,1}=u_j^{\ep,0}+\tau u_1(x_j),\quad j\in\mathcal{T}_M^0,
\ee
where
$$u_1(x):=\p_t u^\ep(x,0)=i\left[u_0''(x)-u_0(x)\ln(\ep+|u_0(x)|)^2\right],
\quad a\le x\le b.$$

Let $0<T<T_{\rm max}$ with $T_{\rm max}$ the maximum existence time of the solution $u^\ep$ to the problem \eqref{RLSE1d} for a fixed $0\le \ep\ll1$. By using the standard
von Neumann analysis, we can show that the discretization
\eqref{scheme} is conditionally stable under the stability condition
\be\label{stab}
0<\tau\le \fl{1}{2\max\{|\ln \ep|, \ln(\ep+\max\limits_{j\in\mathcal{T}_M}|u_j^{\ep,k}|)\}},\qquad
0\le k\le \frac{T}{\tau}.
\ee

Define the error functions $e^{\ep,k}\in X_M$ as
\be
e^{\ep,k}_j=u^\ep(x_j,t_k)-u_j^{\ep,k},\quad j\in \mathcal{T}_M^0,\quad 0\le k\le \frac{T}{\tau},
\ee
where $u^\ep$ is the solution of \eqref{RLSE1d}.
Then we have the following error estimates for \eqref{scheme} with
\eqref{ib} and \eqref{1stp}.

\begin{theorem}[\textbf{Main result}]\label{thm1}
Assume that the solution $u^\ep$ is smooth enough over $\Omega_T:=\Omega\times [0,T]$, i.e.
\[(A)\hskip13mm  u^\ep\in C\left([0,T]; H^5(\Omega)\right)\cap C^2\left([0,T]; H^4(\Omega)\right)\cap C^3\left([0,T]; H^2(\Omega)\right),
\hskip13mm \]
and there exist $\ep_0>0$ and $C_0>0$ independent of $\ep$ such that
\[\|u^\ep\|_{L^\infty(0,T; H^5(\Omega))} +
\|\partial_t^2u^\ep\|_{L^\infty(0,T; H^4(\Omega))}+
\|\partial_t^3u^\ep\|_{L^\infty(0,T; H^2(\Omega))}  \le C_0,
\]
uniformly in $0\le \ep\le \ep_0$. Then there exist $h_0>0$ and
$\tau_0>0$ sufficiently small with $h_0\sim \sqrt{\ep} e^{-CT|\ln (\ep)|^2}$ and $\tau_0\sim \sqrt{\ep} e^{-CT|\ln (\ep)|^2}$ such that, when $0<h\le h_0$ and
$0<\tau\le \tau_0$ satisfying the stability condition \eqref{stab}, we
have the following error estimates
\be
\label{esti2}
\begin{split}
&\|e^{\ep,k}\|\le C_1(\ep,T)(h^2+\tau^2),\qquad 0\le k\le \frac{T}{\tau},\\
&\|e^{\ep,k}\|_{H^1}\le C_2(\ep,T)(h^2+\tau^2), \quad \|u^{\ep,k}\|_\infty\le \Lambda+1,
\end{split}
\ee
where $\Lambda=\|u^\ep\|_{L^\infty( \Omega_T)}$,
$C_1(\ep,T)\sim e^{CT|\ln(\ep)|^2}$, $C_2(\ep,T)\sim
\frac{1}{\ep}e^{CT|\ln(\ep)|^2}$ and $C$ depends on $C_0$.
\end{theorem}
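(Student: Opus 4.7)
The plan is to proceed by a standard induction (``bootstrap'') argument on the time level $k$, carrying forward three hypotheses simultaneously: the $L^2$ error bound, the $H^1$ error bound, and the $L^\infty$ bound on $u^{\ep,k}$. The base case $k=0,1$ follows from the initial data and the Taylor expansion defining $u^{\ep,1}$ in \eqref{1stp}. At each step I assume the three bounds hold for all $0\le n\le k$ and verify them at level $k+1$.

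First I would introduce the local truncation error $\eta_j^{\ep,k}$ obtained by plugging the exact solution $u^\ep(x_j,t_k)$ into \eqref{scheme}. Using the smoothness assumption (A) and Taylor expansion in both $x$ (for $\delta_x^2$) and $t$ (for $\delta_t^c$ and the Crank--Nicolson-type average), I would show $\|\eta^{\ep,k}\|+|\eta^{\ep,k}|_{H^1}\lesssim h^2+\tau^2$ with constants depending only on $C_0$ and $|\ln \ep|$ (the logarithm appears when bounding $\partial_t u^\ep=i[\Delta u^\ep-u^\ep\ln(\ep+|u^\ep|)^2]$ and its higher $t$-derivatives). Subtracting produces the error equation
\begin{equation*}
i\delta_t^c e_j^{\ep,k}=-\tfrac12\delta_x^2\bigl(e_j^{\ep,k+1}+e_j^{\ep,k-1}\bigr)+G_j^k+\eta_j^{\ep,k},\qquad G_j^k:=f_\ep\bigl(u^\ep(x_j,t_k)\bigr)-f_\ep(u_j^{\ep,k}),
\end{equation*}
where $f_\ep(z):=z\ln(\ep+|z|)^2$.

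Next I would derive the $L^2$ estimate. Multiplying by $h\,\overline{e_j^{\ep,k+1}+e_j^{\ep,k-1}}$, summing over $j\in\mathcal T_M$, using the symmetry \eqref{innpX_M}, and taking imaginary parts makes the Laplacian contribution drop out and yields $\tfrac{1}{2\tau}(\|e^{\ep,k+1}\|^2-\|e^{\ep,k-1}\|^2)$ from the time-difference. The key nonlinear manipulation is to write $e^{\ep,k+1}+e^{\ep,k-1}=2e^{\ep,k}+(e^{\ep,k+1}-e^{\ep,k})+(e^{\ep,k-1}-e^{\ep,k})$. The leading piece is controlled by the $\ep$-free bound of Lemma~\ref{pre}, giving $|\mathrm{Im}[G_j^k\,\overline{2e_j^{\ep,k}}]|\lesssim|e_j^{\ep,k}|^2$. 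For the two remainders I use the crude Lipschitz bound $|f_\ep(z_1)-f_\ep(z_2)|\lesssim|\ln\ep|\,|z_1-z_2|$ (valid because the induction hypothesis keeps both $u^\ep$ and $u^{\ep,k}$ bounded by $\Lambda+1$) together with Cauchy--Schwarz and Young's inequality, producing a factor $|\ln\ep|^2$ in front of $\|e^{\ep,k\pm1}\|^2+\|e^{\ep,k}\|^2$. The truncation contribution is handled by Cauchy--Schwarz. Summing the resulting discrete inequality and applying the discrete Gronwall lemma yields
\begin{equation*}
\|e^{\ep,k}\|^2\lesssim(h^2+\tau^2)^2\,e^{CT|\ln\ep|^2},
\end{equation*}
which is exactly $C_1(\ep,T)\sim e^{CT|\ln\ep|^2}$.

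For the $H^1$ estimate I would apply $\delta_x^+$ to the error equation and repeat the energy argument, multiplying this time by $\overline{\delta_x^+(e^{\ep,k+1}+e^{\ep,k-1})}$. The obstacle here is that Lemma~\ref{pre} does not survive differentiation, so I would bound $\delta_x^+ G^k$ directly by using $\sup|f_\ep'|\lesssim 1/\ep$ together with discrete product-rule identities; this is the precise origin of the factor $1/\ep$ in $C_2(\ep,T)$. Combined with a truncation bound $|\eta^{\ep,k}|_{H^1}\lesssim h^2+\tau^2$ and discrete Gronwall, this yields the claimed $H^1$ bound. The closing step is the discrete Sobolev inequality $\|v\|_\infty\lesssim\|v\|_{H^1}$ in one dimension: under the smallness assumption $h,\tau\le h_0,\tau_0\sim\sqrt\ep\,e^{-CT|\ln\ep|^2}$, we get $\|e^{\ep,k+1}\|_\infty\le 1$, hence $\|u^{\ep,k+1}\|_\infty\le\Lambda+1$, thereby closing the induction.

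The main obstacle I anticipate is the tight, $\ep$-explicit tracking of constants through the nonlinearity: naively differentiating $f_\ep$ produces factors of $1/\ep$ everywhere, so the $L^2$ part crucially relies on the imaginary-part cancellation of Lemma~\ref{pre} (otherwise one would get $e^{CT/\ep}$ rather than $e^{CT|\ln\ep|^2}$), whereas the $H^1$ part cannot avoid one such $1/\ep$ and this in turn dictates the mesh restriction $h,\tau\lesssim\sqrt\ep\,e^{-CT|\ln\ep|^2}$ needed to activate the discrete Sobolev inequality and close the bootstrap.
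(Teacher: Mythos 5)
Your overall architecture (induction carrying the $L^2$, $H^1$ and $L^\infty$ bounds, truncation error of order $h^2+\tau^2$, an imaginary-part energy identity for $L^2$, a differentiated energy identity for $H^1$, and the discrete Sobolev inequality to close the bootstrap) is exactly the paper's. Two of your supporting claims, however, do not hold up, and one of them is a genuine gap. First, the minor one: Lemma~\ref{pre} is \emph{not} what saves the $L^2$ estimate from a factor $e^{CT/\ep}$. On the set $\{|z|\le \Lambda+1\}$ the Lipschitz constant of $f_\ep(z)=z\ln(\ep+|z|)^2$ is $O(|\ln\ep|)$, not $O(1/\ep)$ (the $1/\ep$ only appears in the Lipschitz constant of $f_\ep'$). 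The paper therefore bounds $|r_j^{\ep,m}|\lesssim (1+|\ln\ep|)\,|e_j^{\ep,m}|$ pointwise, by writing $r_j^{\ep,m}=e_j^{\ep,m}\ln(\ep+|u^\ep|)^2+2u_j^{\ep,m}\ln\bigl(\tfrac{\ep+|u^\ep|}{\ep+|u_j^{\ep,m}|}\bigr)$ and using $\ln(1+x)\le x$; no cancellation from Lemma~\ref{pre} is needed, and your splitting $e^{k+1}+e^{k-1}=2e^k+\cdots$ is superfluous (though not wrong).

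The substantive gap is in the $H^1$ step. Bounding $\delta_x^+G^k$ ``using $\sup|f_\ep'|\lesssim 1/\ep$ together with discrete product-rule identities'' will, if done as stated, place a factor $1/\ep$ in front of $|\delta_x^+e^{\ep,m}_j|$; feeding that into the Gronwall inequality yields $|e^{\ep,n+1}|_{H^1}^2\lesssim e^{CT/\ep^2}(h^2+\tau^2)^2$, which is not the claimed $C_2(\ep,T)\sim\frac1\ep e^{CT|\ln\ep|^2}$ and would force the absurd mesh restriction $h,\tau\lesssim e^{-CT/\ep^2}$. To get the stated constant you must arrange the splitting so that the $O(1/\ep)$ factor multiplies only the \emph{zeroth-order} errors $|e_j^{\ep,m}|$ (these are then absorbed using the already-proved $L^2$ bound, contributing the multiplicative $1/\ep^2$ to the inhomogeneous term), while the coefficient of $|\delta_x^+e_j^{\ep,m}|$ stays at $O(|\ln\ep|)$ so that the Gronwall exponent remains $|\ln\ep|^2$. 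Concretely, the paper writes
\begin{equation*}
\delta_x^+ r_j^{\ep,m}=\int_0^1\bigl[f_\ep'(u^{\ep,m}_{j,\tht})-f_\ep'(v^{\ep,m}_{j,\tht})\bigr]\,\delta_x^+u^\ep(x_j,t_m)\,d\tht
+\int_0^1 f_\ep'(v^{\ep,m}_{j,\tht})\,\delta_x^+e_j^{\ep,m}\,d\tht
\end{equation*}
(schematically; in the paper this is the decomposition into $I_1,I_2,I_3$), so that the $1/\ep$-Lipschitz difference of $f_\ep'$ is multiplied by the \emph{exact} gradient $\delta_x^+u^\ep(x_j,t_m)$, which is bounded by $C_0$, rather than by the numerical gradient $\delta_x^+u_j^{\ep,m}$, which has no $h$-independent $\ell^\infty$ bound. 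This yields $|\delta_x^+r_j^{\ep,m}|\lesssim \frac1\ep(|e_j^{\ep,m}|+|e_{j+1}^{\ep,m}|)+|\ln\ep|\,|\delta_x^+e_j^{\ep,m}|$, which is the estimate your argument needs but does not establish.
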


The error bounds in this Theorem show not only the
quadratical convergence  in terms of the
mesh size $h$ and time step $\tau$ but also how the
explicit dependence on the regularization parameter
$\ep$. Here we remark that the  Assumption (A) is
valid at least in the case of taking Gaussian as the initial datum.


Define the error functions $\widetilde e^{\ep,k}\in X_M$ as
\be
\widetilde e^{\ep,k}_j=u(x_j,t_k)-u_j^{\ep,k},\quad j\in \mathcal{T}_M^0,\quad 0\le k\le \frac{T}{\tau},
\ee
where $u^\ep$ is the solution of the LogSE \eqref{LSE} with $\Omega=(a,b)$.
Combining Proposition \ref{prop1} and Theorem \ref{thm1},
we immediately obtain (see an illustration in the following diagram):
\[
\xymatrixcolsep{8pc}\xymatrix{
u^{\ep,k}\ar[r]^{\quad O(h^2+\tau^2)} \ar@{-->}[rd]^{
}_{\hspace{-9mm}O(\ep)+ O(h^2+\tau^2)}&
u^\ep(\cdot, t_k)\ar[d]^{O(\ep)} \\
&u(\cdot, t_k)}
\]

\begin{corollary}\label{cor1}
Under the assumptions of Proposition \ref{prop1} and Theorem \ref{thm1},
we have the following error estimates
\be\label{eror}
\begin{split}
&\|\widetilde{e}^{\ep,k}\|\le C_1\ep+C_1(\ep, T)(h^2+\tau^2),\quad\\
&\|\widetilde{e}^{\ep,k}\|_{H^1}\le C_2\ep^{1/2}+C_2(\ep, T)(h^2+\tau^2),\qquad 0\le k\le \frac{T}{\tau},
\end{split}
\ee
where $C_1$ and $C_2$ are presented as in Proposition \ref{prop1}, and $C_1(\ep, T)$ and $C_2(\ep, T)$ are given in Theorem \ref{thm1}.
\end{corollary}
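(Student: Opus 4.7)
The plan is to obtain \eqref{eror} by splitting the total error at the grid points into a modeling part and a discretization part, and then apply Proposition~\ref{prop1} and Theorem~\ref{thm1} independently. Concretely, I would write
\[
\widetilde e^{\ep,k}_j \;=\; \bigl(u(x_j,t_k)-u^\ep(x_j,t_k)\bigr)\;+\;\bigl(u^\ep(x_j,t_k)-u^{\ep,k}_j\bigr)\;=:\;\pi^{\ep,k}_j + e^{\ep,k}_j,
\]
so that the triangle inequality in the discrete norms of \eqref{innorm} reduces each inequality of the corollary to separate estimates of $\|\pi^{\ep,k}\|$, $\|\pi^{\ep,k}\|_{H^1}$ and of $\|e^{\ep,k}\|$, $\|e^{\ep,k}\|_{H^1}$. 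The latter two quantities are bounded verbatim by Theorem~\ref{thm1}, producing exactly the $C_1(\ep,T)(h^2+\tau^2)$ and $C_2(\ep,T)(h^2+\tau^2)$ contributions in \eqref{eror}.

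The remaining step is the bridge from the continuous bounds of Proposition~\ref{prop1} to the discrete ones for $\pi^{\ep,k}$. For any smooth $w$ on $\Omega=(a,b)$, writing $w(x_j)=w(x)+\int_x^{x_j}w'(s)\,ds$ for $x$ in a sub-interval of length $h$ centred at $x_j$, then squaring and integrating in $x$ yields
\[
h\sum_{j=1}^{M-1}|w(x_j)|^2 \;\lesssim\; \|w\|_{L^2(\Omega)}^2 + h^2\|w'\|_{L^2(\Omega)}^2,
\]
and since $\dt_x^+ w(x_j)=h^{-1}\int_{x_j}^{x_{j+1}}w'(s)\,ds$, the Cauchy--Schwarz inequality gives
\[
h\sum_{j=0}^{M-1}|\dt_x^+ w(x_j)|^2 \;\le\; \|w'\|_{L^2(\Omega)}^2.
\]
Applied to $w=u(\cdot,t_k)-u^\ep(\cdot,t_k)$, these two estimates combined with Theorem~\ref{theo:cauchy} (which places $u,u^\ep\in L^\infty(0,T;H^2(\Omega))$ uniformly in $\ep$) and with Proposition~\ref{prop1} give $\|\pi^{\ep,k}\|\lesssim \ep$ and $\|\pi^{\ep,k}\|_{H^1}\lesssim \ep^{1/2}$, with the same constants $C_1,C_2$ of Proposition~\ref{prop1} up to a harmless absolute factor. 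Note that the $h^2\|w'\|_{L^2}^2$ remainder in the first estimate is absorbed by the leading $\ep^2$ term provided by Proposition~\ref{prop1} as soon as $h\le 1$.

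Finally, assembling the two pieces through the triangle inequality produces \eqref{eror}, as depicted by the diagram just above the corollary. No genuinely new analytical difficulty arises: the modeling and discretization errors are completely decoupled by the construction, and all substantive work has already been carried out in Proposition~\ref{prop1} and Theorem~\ref{thm1}. The only technical point requiring any care is the standard one-dimensional discrete/continuous norm comparison above, which is automatic under the $H^2$-regularity that is already available.
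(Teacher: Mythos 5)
Your proposal is correct and follows essentially the same route as the paper, which simply combines Proposition~\ref{prop1} and Theorem~\ref{thm1} by the triangle inequality applied to the splitting $\widetilde e^{\ep,k}=(u-u^\ep)+(u^\ep-u^{\ep,k})$ at the grid points; your added discrete-versus-continuous norm comparison is a reasonable extra precaution that the paper leaves implicit. The only small imprecision is your claim that $h^2\|w'\|_{L^2}^2\lesssim\ep^2$ follows from $h\le 1$ alone; it actually needs either $h^2\lesssim\ep$ (which does hold under the hypothesis $h\le h_0\sim\sqrt{\ep}\,e^{-CT|\ln\ep|^2}$ of Theorem~\ref{thm1}) or the observation that $h\,\ep^{1/2}\le\tfrac12(h^2+\ep)$, both of whose terms already appear on the right-hand side of \eqref{eror}.
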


\subsection{Error estimates}
Define the local truncation error $\xi_j^{\ep,k}\in X_M$ for $k\ge1$ as
\be\label{local}
\begin{split}
\xi_j^{\ep,k}&=i\dt_t^c u^\ep(x_j,t_k)+\fl{1}{2}\left(\dt_x^2u^\ep(x_j,t_{k+1})+
\dt_x^2u^\ep(x_j,t_{k-1})\right)\\
&\quad-u^\ep(x_j,t_k)\ln(\ep+|u^\ep(x_j,t_k)|)^2, \qquad j\in \mathcal{T}_M, \quad 1\le k<\frac{T}{\tau},
\end{split}
\ee
then we have the following bounds for the local truncation error.

\begin{lemma}[Local truncation error]\label{local_e}
Under Assumption (A), we have
\[\|\xi^{\ep,k}\|_{H^1}\lesssim h^2+\tau^2,\quad 1\le k<\fl T \tau.\]
\end{lemma}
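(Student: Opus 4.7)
The key observation is that the nonlinear term in $\xi^{\ep,k}$ is evaluated at the single grid point $(x_j,t_k)$, so it cancels exactly against the PDE at that point. Using $i\p_t u^\ep(x_j,t_k)+\p_{xx}u^\ep(x_j,t_k)=u^\ep(x_j,t_k)\ln(\ep+|u^\ep(x_j,t_k)|)^2$ to eliminate the logarithmic term from \eqref{local}, I would rewrite
\[
\xi_j^{\ep,k}=\underbrace{i\bigl[\dt_t^c u^\ep(x_j,t_k)-\p_t u^\ep(x_j,t_k)\bigr]}_{=:T_1(x_j,t_k)}+\underbrace{\tfrac{1}{2}\bigl[\dt_x^2 u^\ep(x_j,t_{k+1})+\dt_x^2 u^\ep(x_j,t_{k-1})\bigr]-\p_{xx}u^\ep(x_j,t_k)}_{=:T_2(x_j,t_k)},
\]
thereby reducing the problem to standard consistency estimates for a centered second-order-in-time and second-order-in-space scheme applied to a \emph{smooth} function. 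No remnant of the singular nonlinearity remains.

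Next, I would estimate each piece by Taylor expansion with integral remainder. For $T_1$, the usual centered-difference identity gives
\[
T_1(x_j,t_k)=\tfrac{\tau^2}{2}\int_{0}^{1}(1-s)^{2}\bigl[\p_t^{3}u^\ep(x_j,t_k+s\tau)-\p_t^{3}u^\ep(x_j,t_k-s\tau)\bigr]\,ds,
\]
so $\|T_1(\cdot,t_k)\|_{L^2}\lesssim\tau^{2}\|\p_t^{3}u^\ep\|_{L^\infty(0,T;L^2)}$. For $T_2$, I would split further,
\[
T_2=\tfrac{1}{2}\!\!\sum_{\pm}\!\!\bigl[\dt_x^2 u^\ep(x_j,t_{k\pm1})-\p_{xx}u^\ep(x_j,t_{k\pm1})\bigr]+\tfrac{1}{2}\bigl[\p_{xx}u^\ep(x_j,t_{k+1})+\p_{xx}u^\ep(x_j,t_{k-1})\bigr]-\p_{xx}u^\ep(x_j,t_k),
\]
and handle the first bracket by the standard $O(h^{2})$ finite-difference remainder involving $\p_x^{4}u^\ep$, and the second by an $O(\tau^{2})$ Taylor remainder involving $\p_t^{2}\p_{xx}u^\ep$. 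Under Assumption (A), both remainders are controlled in $L^{2}(\Omega)$ uniformly in $\ep\in[0,\ep_0]$, giving $\|\xi^{\ep,k}\|\lesssim h^{2}+\tau^{2}$.

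To upgrade this to the discrete $H^{1}$ norm, I would apply $\dt_x^{+}$ to each of the identities above and observe that the integral-remainder forms commute with $\dt_x^{+}$ after one more spatial differentiation of the integrand. The $L^{2}$-norm of $\dt_x^{+}\xi^{\ep,k}$ is then controlled by $\|\p_x\p_t^{3}u^\ep\|_{L^\infty(0,T;L^2)}$, $\|\p_x^{5}u^\ep\|_{L^\infty(0,T;L^2)}$, and $\|\p_t^{2}\p_x^{3}u^\ep\|_{L^\infty(0,T;L^2)}$, all of which are finite by Assumption (A) (the requirements $u^\ep\in H^{5}$, $\p_t^{2}u^\ep\in H^{4}$, $\p_t^{3}u^\ep\in H^{2}$ are matched exactly to what these three remainder terms need).

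The only subtle point is verifying that all constants are independent of $\ep$, but Assumption (A) is stated precisely with this uniformity. No obstacle involving the logarithmic nonlinearity appears here, since that term has been eliminated at the very first step by invoking the equation pointwise; the argument is essentially the same as for a classical semi-implicit Crank--Nicolson/leap-frog hybrid discretization of a linear Schr\"odinger equation.
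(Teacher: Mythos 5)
Your proposal follows essentially the same route as the paper's proof: the nonlinearity is evaluated only at the single point $(x_j,t_k)$ and cancels against the PDE there, after which $\xi_j^{\ep,k}$ splits into the three classical remainders (centered time difference, average of $\p_{xx}$ at $t_{k\pm1}$ versus its value at $t_k$, and the spatial second-difference errors at $t_{k\pm1}$) --- exactly the terms $\frac{i\tau^2}{4}\alpha_j^{\ep,k}+\frac{\tau^2}{2}\beta_j^{\ep,k}+\frac{h^2}{12}\gamma_j^{\ep,k}$ written out in the paper, followed by the same application of $\dt_x^+$ for the $H^1$ seminorm. Two points deserve attention. First, a minor slip: the integral remainder for $T_1$ should involve the \emph{sum} $\p_t^{3}u^\ep(x_j,t_k+s\tau)+\p_t^{3}u^\ep(x_j,t_k-s\tau)$ with prefactor $\tau^2/4$, not the difference with $\tau^2/2$; this does not affect the estimate. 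Second, and more substantively, the intermediate claim $\|T_1(\cdot,t_k)\|\lesssim\tau^{2}\|\p_t^{3}u^\ep\|_{L^\infty(0,T;L^2)}$ is too weak as stated: the norm on the left is the discrete grid norm $\bigl(h\sum_j|\cdot|^2\bigr)^{1/2}$ of point values, and a Riemann sum of $|\p_t^3u^\ep(x_j,t)|^2$ is not controlled by the continuous $L^2$ norm alone. One extra spatial derivative is needed --- the paper bounds $\|\alpha^{\ep,k}\|$ by $\|\p_t^3u^\ep\|_{L^\infty(0,T;H^1)}$ through a summation-by-parts/Cauchy--Schwarz comparison of the grid sum with the integral, and similarly $\|\beta^{\ep,k}\|$ by $\|\p_t^2u^\ep\|_{L^\infty(0,T;H^3)}$; only the $\gamma$ term, whose integrand is averaged over the cell in $x$, is bounded directly by $\|u^\ep\|_{L^\infty(0,T;H^4)}$. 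Your closing observation that the Sobolev indices in Assumption (A) are ``matched exactly'' to the three remainders is correct precisely because of this extra derivative, so the repair is routine, but as written the $L^2$ step of your argument has a gap.
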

\begin{proof}
By Taylor expansion, we have
\be\label{tmp1}
\xi_j^{\ep,k}=\fl{i\tau^2}{4}\alpha_j^{\ep,k}+\fl{\tau^2}{2}\beta_j^{\ep,k}+\fl{h^2}{12}\gamma_j^{\ep,k},
\ee
where
\begin{align*}
\alpha_j^{\ep,k}&=\int_{-1}^1(1-|s|)^2\p_t^3u^\ep(x_j,t_k+s \tau)ds,\quad
\beta_j^{\ep,k}=\int_{-1}^1(1-|s|)\p_t^2u_{xx}^\ep(x_j,t_k+s \tau)ds,\\
\gamma_j^{\ep,k}&=\int_{-1}^1(1-|s|)^3
\left(\p_x^4u^\ep(x_j+sh,t_{k+1})+\p_x^4u^\ep(x_j+sh,t_{k-1})\right)ds.
\end{align*}
By the Cauchy-Schwarz inequality, we can get that
\begin{align*}
\|\alpha^{\ep,k}\|^2&=h\sum\limits_{j=1}^{M-1}|\alpha_j^{\ep,k}|^2
\le h\int_{-1}^1 (1-|s|)^4ds\sum\limits_{j=1}^{M-1}
\int_{-1}^1\left|\p_t^3u^\ep(x_j,t_k+s\tau)\right|^2ds\\
&=\fl{2}{5}\Big[\int_{-1}^1\|\p_t^3u^\ep(\cdot,t_k+s\tau)\|_{L^2(\Omega)}^2ds\Big.\\
&\qquad\Big.-\int_{-1}^1\sum\limits_{j=0}^{M-1}\int_{x_j}^{x_{j+1}}(|\p_t^3u^\ep(x,t_k+s\tau)|^2
-|\p_t^3 u^\ep(x_j,t_k+s\tau)|^2)dxds\Big]\\
&=\fl{2}{5}\Big[\int_{-1}^1\|\p_t^3u^\ep(\cdot,t_k+s\tau)\|_{L^2(\Omega)}^2ds\Big.\\
&\qquad\Big.-\int_{-1}^1\sum\limits_{j=0}^{M-1}\int_{x_j}^{x_{j+1}}
\int_{x_j}^\og\p_x|\p_t^3u^\ep(x',t_k+s\tau)|^2
dx'd\og ds\Big]\\
&\le\fl{2}{5}\int_{-1}^1\Big[\|\p_t^3u^\ep(\cdot,t_k+s\tau)\|_{L^2(\Omega)}^2\Big.\\
&\qquad\qquad+\Big.2h \|\p_t^3u_x^\ep(\cdot,t_k+s\tau)\|_{L^2(\Omega)}\|\p_t^3u^\ep(\cdot,t_k+s\tau)\|_{L^2(\Omega)}\Big]ds\\
&\le\max\limits_{0\le t\le T}\left(\|\p_t^3 u^\ep\|_{L^2(\Omega)}+h\|\p_t^3 u_x^\ep\|_{L^2(\Omega)}\right)^2,
\end{align*}
which yields that when $h\le 1$,
\[\|\alpha^{\ep,k}\|\le
\|\p_t^3u^\ep\|_{L^\infty(0,T;H^1(\Omega))}.\]
 Applying the similar approach, it can be established that
\[\|\beta^{\ep,k}\|\le 2\|\p_t^2u^\ep\|_{L^\infty(0,T;H^3(\Omega))}.\]
On the other hand, we can obtain that
\begin{align*}
\|\gamma^{\ep,k}\|^2&
\le h\int_{-1}^1 (1-|s|)^6ds\sum\limits_{j=1}^{M-1}
\int_{-1}^1\left|\p_x^4u^\ep(x_j+sh,t_{k+1})+\p_x^4u^\ep(x_j+sh,t_{k-1})\right|^2ds\\&\le \fl{4h}{7}\sum\limits_{j=1}^{M-1}
\int_{-1}^1\left(\left|\p_x^4u^\ep(x_j+sh,t_{k+1})\right|^2+
\left|\p_x^4u^\ep(x_j+sh,t_{k-1})\right|^2\right)ds\\
&\le \fl{8}{7}\left(\|\p_x^4u^\ep(\cdot,t_{k-1})\|^2_{L^2(\Omega)}+
\|\p_x^4u^\ep(\cdot,t_{k+1})\|^2_{L^2(\Omega)}\right)\\
&\le 4\|u^\ep\|^2_{L^\infty(0,T;H^4(\Omega))},
\end{align*}
which implies that $\|\gamma^{\ep,k}\|\le 2\|u^\ep\|_{L^\infty(0,T;H^4(\Omega))}$.
Hence by Assumption (A), we get
\begin{align*}
\|\xi^{\ep,k}\|&\lesssim \tau^2\left(\|\p_t^3u^\ep\|_{L^\infty(0,T;H^1(\Omega))}+
\|\p_t^2u^\ep\|_{L^\infty(0,T;H^3(\Omega))}\right)+h^2\|u^\ep\|_{L^\infty(0,T;H^4(\Omega))}\\
&\lesssim_{C_0} \tau^2+h^2.
\end{align*}
Applying $\dt_x^+$ to $\xi^{\ep,k}$ and using the same approach, we can get that
\begin{align*}
|\xi^{\ep,k}|_{H^1}&\lesssim \tau^2\left(\|\p_t^3u^\ep\|_{L^\infty(0,T;H^2(\Omega))}+
\|\p_t^2u^\ep\|_{L^\infty(0,T;H^4(\Omega))}\right)+h^2\|u^\ep\|_{L^\infty(0,T;H^5(\Omega))}\\
&\lesssim_{C_0} \tau^2+h^2,
\end{align*}
which completes the proof.
\end{proof}

For the first step, we have the following estimates.
\begin{lemma}[Error bounds for $k=1$] \label{initial-e}
Under Assumption (A), the first step errors of the discretization \eqref{1stp} satisfy
\[
e^{\ep,0}=0,\quad
\|e^{\ep,1}\|_{H^1}\lesssim\tau^2.
\]
\end{lemma}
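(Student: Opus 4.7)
The bound $e^{\ep,0}=0$ is immediate from the choice $u_j^{\ep,0}=u_0(x_j)=u^\ep(x_j,0)$, so the real content is the $H^1$ bound on $e^{\ep,1}$. My plan is a direct Taylor expansion: since $u_1(x_j)$ is defined precisely so that it agrees with $\partial_t u^\ep(x_j,0)$ (indeed this is why the definition \eqref{1stp} uses the equation \eqref{RLSE1d} at $t=0$ to rewrite $\partial_t u^\ep(\cdot,0)$), the integral form of Taylor's remainder yields
\begin{equation*}
e_j^{\ep,1} = u^\ep(x_j,\tau)-u^\ep(x_j,0)-\tau\,\partial_t u^\ep(x_j,0)=\int_0^\tau (\tau-s)\,\partial_t^2 u^\ep(x_j,s)\,ds,
\end{equation*}
and all the work then amounts to turning this into discrete norm estimates.

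For the $L^2$ piece, I would apply Cauchy--Schwarz in $s$ to obtain $|e_j^{\ep,1}|^2\le (\tau^3/3)\int_0^\tau |\partial_t^2 u^\ep(x_j,s)|^2\,ds$, then multiply by $h$ and sum over $j\in\mathcal T_M$; standard comparison of Riemann sums with integrals (using one spatial derivative of $\partial_t^2 u^\ep$ as in the proof of Lemma~\ref{local_e}) converts the discrete sum into $\|\partial_t^2 u^\ep(\cdot,s)\|_{L^2(\Omega)}^2$ up to an $O(h)$ correction involving $\|\partial_t^2 u^\ep(\cdot,s)\|_{H^1(\Omega)}$. This gives $\|e^{\ep,1}\|\lesssim \tau^2\,\|\partial_t^2 u^\ep\|_{L^\infty(0,T;H^1(\Omega))}$.

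For the semi-norm $|e^{\ep,1}|_{H^1}$ I would commute $\delta_x^+$ with the integral over $s$, so that
\begin{equation*}
\delta_x^+ e_j^{\ep,1}=\int_0^\tau (\tau-s)\,\delta_x^+\partial_t^2 u^\ep(x_j,s)\,ds,
\end{equation*}
and use the identity $\delta_x^+ v(x_j)=\frac{1}{h}\int_{x_j}^{x_{j+1}}\partial_x v(y)\,dy$ together with Cauchy--Schwarz in $y$ to majorize $|\delta_x^+\partial_t^2 u^\ep(x_j,s)|^2$ by $\frac{1}{h}\int_{x_j}^{x_{j+1}}|\partial_x\partial_t^2 u^\ep(y,s)|^2\,dy$. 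Then Cauchy--Schwarz in $s$ followed by summation $h\sum_j$ telescopes the spatial pieces into $\|\partial_x\partial_t^2 u^\ep(\cdot,s)\|_{L^2(\Omega)}^2$, yielding $|e^{\ep,1}|_{H^1}\lesssim \tau^2\,\|\partial_t^2 u^\ep\|_{L^\infty(0,T;H^1(\Omega))}$.

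There is essentially no obstacle: the argument needs only $\partial_t^2 u^\ep\in L^\infty(0,T;H^1(\Omega))$, which is comfortably supplied by Assumption (A) (even $H^4$ in space is available) with an $\ep$-uniform bound by $C_0$. Combining the two estimates gives $\|e^{\ep,1}\|_{H^1}\lesssim_{C_0}\tau^2$, as claimed. The only minor care is to ensure that the Riemann-sum-to-integral passage does not introduce any $h$-dependence that spoils the bound, which is handled exactly as in Lemma~\ref{local_e}.
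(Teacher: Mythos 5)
Your proof is correct and follows essentially the same route as the paper's: express $e_j^{\ep,1}$ via the integral form of the Taylor remainder, $e_j^{\ep,1}=\tau^2\int_0^1(1-s)\,\partial_t^2u^\ep(x_j,s\tau)\,ds$, and then pass from discrete sums to integrals using Assumption (A). Your telescoping identity $\delta_x^+v(x_j)=\frac1h\int_{x_j}^{x_{j+1}}\partial_x v$ even lets you bound the $H^1$ semi-norm with only $\partial_t^2u^\ep\in L^\infty(0,T;H^1(\Omega))$, whereas the paper invokes $L^\infty(0,T;H^2(\Omega))$; both are amply covered by Assumption (A).
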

\begin{proof}
By the definition of $u^{\ep,1}_j$ in \eqref{1stp}, we have
\begin{align*}
e_j^{\ep,1}=\tau^2\int_0^1(1-s)u_{tt}^\ep(x_j,s\tau)ds,
  \end{align*}
which implies that
\[
\|e^{\ep,1}\|\lesssim \tau^2\|\p_t^2u^\ep\|_{L^\infty(0,T; H^1(\Omega))}\lesssim \tau^2,\quad
|e^{\ep,1}|_{H^1}\lesssim \tau^2\|\p_t^2u^\ep\|_{L^\infty(0,T; H^2(\Omega))}\lesssim\tau^2,
\]
and the proof is completed.
\end{proof}
\bigskip

\begin{proof}[Proof of Theorem \ref{thm1}] We prove \eqref{esti2} by
  induction. It follows from Lemma \ref{initial-e} that \eqref{esti2}
  is true for $k=0, 1$.

Assume \eqref{esti2} is valid for $k\le n\le \fl{T}{\tau}-1$. Next we need to show that \eqref{esti2} still holds for $k=n+1$.
Subtracting \eqref{scheme} from \eqref{local}, we get the error equations
\be\label{eeq}
i\dt_t^c e^{\ep,m}_j=-\fl{1}{2}(\dt_x^2e_j^{\ep,m+1}+\dt_x^2e_j^{\ep,m-1})+r_j^{\ep,m}+\xi_j^{\ep,m},\quad
j \in \mathcal{T}_M, \quad 1\le m\le \frac{T}{\tau}-1,
\ee
where $r^{\ep,m}\in X_M$ represents the difference between the logarithmic nonlinearity
\be\label{rk}
r_j^{\ep,m}=u^\ep(x_j,t_m)\ln(\ep+|u^\ep(x_j,t_m)|)^2-u_j^{\ep,m}\ln(\ep+|u_j^{\ep,m}|)^2
, \quad 1\le m\le \frac{T}{\tau}-1.
\ee
Multiplying both sides of \eqref{eeq} by $2\tau\,(\overline{e_{j}^{\ep,m+1}+e_{j}^{\ep,m-1}})$, summing together for $j\in \mathcal{T}_M$ and taking the imaginary parts, we obtain for $1\le m< T/\tau$,
\be\label{eq1}
\begin{aligned}
\|e^{\ep,m+1}\|^2-\|e^{\ep,m-1}\|^2&=2\tau \, \mathrm{Im}(r^{\ep,m}+\xi^{\ep,m},e^{\ep,m+1}+e^{\ep,m-1})\\
&\le 2\tau\left(\|r^{\ep,m}\|^2+\|\xi^{\ep,m}\|^2+\|e^{\ep,m+1}\|^2+\|e^{\ep,m-1}\|^2\right).
\end{aligned}
\ee
Summing \eqref{eq1} for $m=1, 2, \cdots, n$ ($n\le \fl{T}{\tau}-1$), we obtain
\begin{align}
\|e^{\ep,n+1}\|^2+\|e^{\ep,n}\|^2&\le \|e^{\ep,0}\|^2+\|e^{\ep,1}\|^2+2\tau\|e^{\ep,n+1}\|^2+2\tau\sum\limits_{m=0}^{n-1}
(\|e^{\ep,m}\|^2+\|e^{\ep,m+1}\|^2)\nn\\
&\quad+2\tau\sum\limits_{m=1}^n\left(\|r^{\ep,m}\|^2+\|\xi^{\ep,m}\|^2\right).\label{sum1}
\end{align}
For $m\le n$, when $|u_j^{\ep,m}|\le |u^\ep(x_j,t_m)|$, we write $r_j^{\ep,m}$ as
\begin{align*}
|r_j^{\ep,m}|&=\Big|e_j^{\ep,m}\ln(\ep+|u^\ep(x_j,t_m)|)^2+2
u_j^{\ep,m}\ln\Big(\fl{\ep+|u^\ep(x_j,t_m)|}{\ep+|u_j^{\ep,m}|}\Big)\Big|\\
&\le 2\max\{\ln(\ep^{-1}), |\ln(\ep+\Lambda)|\}|e_j^{\ep,m}|+2|u_j^{\ep,m}|\ln\Big(1+\fl{|u^\ep(x_j,t_m)|-|u_j^{\ep,m}|}
{\ep+|u_j^{\ep,m}|}\Big)\\
&\le 2|e_j^{\ep,m}|(1+\max\{\ln(\ep^{-1}), |\ln(\ep+\Lambda)|\}).
\end{align*}
On the other hand, when $|u^\ep(x_j,t_m)|\le |u_j^{\ep,m}|$, we write $r_j^{\ep,m}$ as
\begin{align*}
|r_j^{\ep,m}|&=\Big|e_j^{\ep,m}\ln(\ep+|u_j^{\ep,m}|)^2+2
u^\ep(x_j,t_m)\ln\Big(\fl{\ep+|u^\ep(x_j,t_m)|}{\ep+|u_j^{\ep,m}|}\Big)\Big|\\
&\le  2\max\{\ln(\ep^{-1}), |\ln(\ep+1+\Lambda)|\}|e_j^{\ep,m}|\\
&\quad+2|u^\ep(x_j,t_m)|\ln\Big(1+\fl{|u_j^{\ep,m}|-|u^\ep(x_j,t_m)|}
{\ep+|u^\ep(x_j,t_m)|}\Big)\\
&\le 2|e_j^{\ep,m}|(1+\max\{\ln(\ep^{-1}), |\ln(\ep+1+\Lambda)|\}),
\end{align*}
where we use the assumption that $\|u^{\ep,m}\|_\infty\le \Lambda+1$ for $m\le n$.
Thus it follows that
\[\|r^{\ep,m}\|^2\lesssim |\ln(\ep)|^2\|e^{\ep,m}\|^2,\]
when $\ep$ is sufficiently small.
Thus when $\tau\le\fl{1}{2}$, by using Lemmas \ref{local_e}, \ref{initial-e} and \eqref{sum1}, we have
\begin{align*}
\|e^{\ep,n+1}\|^2+\|e^{\ep,n}\|^2&\lesssim \|e^{\ep,0}\|^2+\|e^{\ep,1}\|^2+\tau\sum\limits_{m=0}^{n-1}
(\|e^{\ep,m}\|^2+\|e^{\ep,m+1}\|^2)\\
&\quad+\tau\sum\limits_{m=1}^n\left(\|r^{\ep,m}\|^2+\|\xi^{\ep,m}\|^2\right)\\
&\lesssim (h^2+\tau^2)^2+\tau|\ln(\ep)|^2\sum\limits_{m=0}^{n-1}(\|e^{\ep,m}\|^2+\|e^{\ep,m+1}\|^2).
\end{align*}
We emphasize here that the implicit multiplicative constant in this
inequality depends only on $C_0$, but not on $n$.
Applying the discrete Gronwall inequality, we can conclude that
\[\|e^{\ep,n+1}\|^2\lesssim e^{CT|\ln(\ep)|^2}(h^2+\tau^2)^2,\]
for some $C$ depending on $C_0$,
which gives the error bound for $\|e^{\ep,k}\|$ with $k=n+1$ in
\eqref{esti2} immediately.

To estimate $|e^{\ep,n+1}|_{H^1}$, multiplying both sides of
\eqref{eeq} by $2(\overline{e_j^{\ep,m+1}-e_j^{\ep,m-1}})$ for $m\le
n$, summing together for $j\in \mathcal{T}_M$ and taking the real
parts, we obtain
\begin{align}
&|e^{\ep,m+1}|_{H^1}^2-|e^{\ep,m-1}|_{H^1}^2\nn\\
&\quad=-2  \,\mathrm{Re}\big(r^{\ep,m}+\xi^{\ep,m}, e^{\ep,m+1}-e^{\ep,m-1}\big)\nn\\
&\quad=2\tau\,\mathrm{Im}\big(r^{\ep,m}+\xi^{\ep,m}, -\dt_x^2(e^{\ep,m+1}+e^{\ep,m-1})\big)\nn\\
&\quad=2\tau\,\mathrm{Im}\big<\dt_x^+(r^{\ep,m}+\xi^{\ep,m}), \dt_x^+(e^{\ep,m+1}+e^{\ep,m-1})\big>\nn\\
&\quad\le 2\tau\left(|r^{\ep,m}|_{H^1}^2+|\xi^{\ep,m}|_{H^1}^2+|e^{\ep,m+1}|_{H^1}^2
+|e^{\ep,m-1}|_{H^1}^2\right).\label{eq2}
\end{align}
To give the bound for $\dt_x^+r^{\ep,m}$, for simplicity of notation, denote
\[u^{\ep,m}_{j,\tht}=\tht u^\ep(x_{j+1},t_m)+(1-\tht)u^\ep(x_j,t_m),\quad
v^{\ep,m}_{j,\tht}=\tht v_{j+1}^{\ep,m}+(1-\tht)v_j^{\ep,m},\]
for $j\in\mathcal{T}_M$ and $\tht\in [0,1]$.
Then we have
\begin{align*}
\dt_x^+r_j^{\ep,m}&=2\dt_x^+(u^\ep(x_j,t_m)
\ln(\ep+|u^\ep(x_j,t_m)|))-2\dt_x^+(u_j^{\ep,m}\ln(\ep+|u_j^{\ep,m}|))\\
&=\fl{2}{h}\int_0^1[u_{j,\tht}^{\ep,m}\ln(\ep+|u_{j,\tht}^{\ep,m}|)]'(\tht)d\tht-
\fl{2}{h}\int_0^1[v_{j,\tht}^{\ep,m}\ln(\ep+|v_{j,\tht}^{\ep,m}|)]'(\tht)d\tht\\
&=I_1+I_2+I_3,
\end{align*}
where
\begin{align*}
I_1&:=2\dt_x^+u^\ep(x_j,t_m)\int_0^1\ln(\ep+|u_{j,\tht}^{\ep,m}|)d\tht-
2\dt_x^+u_j^{\ep,m}\int_0^1\ln(\ep+|v_{j,\tht}^{\ep,m}|)d\tht,\\
I_2&:=\dt_x^+u^\ep(x_j,t_m)\int_0^1 \fl{|u_{j,\tht}^{\ep,m}|}{\ep+|u_{j,\tht}^{\ep,m}|}d\tht-
\dt_x^+u_j^{\ep,m}\int_0^1 \fl{|v_{j,\tht}^{\ep,m}|}
{\ep+|v_{j,\tht}^{\ep,m}|}d\tht,\\
I_3&:=\dt_x^+\overline{u^\ep(x_j,t_m)}\int_0^1\fl{(u_{j,\tht}^{\ep,m})^2}
{|u_{j,\tht}^{\ep,m}|(\ep+|u_{j,\tht}^{\ep,m}|)}d\tht-
\dt_x^+\overline{u_j^{\ep,m}}\int_0^1\fl{(v_{j,\tht}^{\ep,m})^2}
{|v_{j,\tht}^{\ep,m}|(\ep+|v_{j,\tht}^{\ep,m}|)}d\tht.
\end{align*}
Then we estimate $I_1$, $I_2$ and $I_3$, separately.
Similar as before, we have
\begin{align*}
\left|I_1\right|&\le
                  2|\dt_x^+u^\ep(x_j,t_m)|\;\int_0^1\Big|\ln
\Big(\fl{\ep+|u_{j,\tht}^{\ep,m}|}{\ep+|v_{j,\tht}^{\ep,m}|}\Big)
\Big|d\tht+2\left|\dt_x^+e_j^{\ep,m}\right|\;\int_0^1
\Big|\ln(\ep+|v_{j,\tht}^{\ep,m}|)\Big|d\tht\\
&=2|\dt_x^+u^\ep(x_j,t_m)|\;\int_0^1\ln\Big(1+\fl{\left\lvert|
u_{j,\tht}^{\ep,m}|-|v_{j,\tht}^{\ep,m}|\right\rvert}{\ep+\min\{
|u_{j,\tht}^{\ep,m}|, |v_{j,\tht}^{\ep,m}|\}}\Big)d\tht\\
&\quad+2\left|\dt_x^+e_j^{\ep,m}\right|\;\int_0^1\big|\ln(\ep+|v_{j,\tht}^{\ep,m}|)\big|d\tht\\
&\le\fl{2}{\ep}|\dt_x^+u^\ep(x_j,t_m)|\left(|e_j^{\ep,m}|+|e_{j+1}^{\ep,m}|\right)+
2\left|\dt_x^+e_j^{\ep,m}\right|\max\{\ln(\ep^{-1}), |\ln(\ep+1+\Lambda)|\}\\
&\lesssim \fl{1}{\ep}\left(|e_j^{\ep,m}|+|e_{j+1}^{\ep,m}|\right)+\ln(\ep^{-1})
\left|\dt_x^+e_j^{\ep,m}\right|,
\end{align*}
and
\begin{align*}
\left|I_2\right|&= \Big|\dt_x^+u^\ep(x_j,t_m)\int_0^1\Big(\fl{|u_{j,\tht}^{\ep,m}|}{\ep+|u_{j,\tht}^{\ep,m}|}-
\fl{|v_{j,\tht}^{\ep,m}|}{\ep+|v_{j,\tht}^{\ep,m}|}\Big)
d\tht+\dt_x^+e_j^{\ep,m}\int_0^1\fl{|v_{j,\tht}^{\ep,m}|}{\ep+|v_j^{\ep,m}|}d\tht\Big|\\
&\le |\dt_x^+e_j^{\ep,m}|+|\dt_x^+u^\ep(x_j,t_m)|
\int_0^1\fl{\ep|u_{j,\tht}^{\ep,m}-v_{j,\tht}^{\ep,m}|}{(\ep+|u_{j,\tht}^{\ep,m}|)(\ep+|v_{j,\tht}^{\ep,m}|)}d\tht\\
&\le |\dt_x^+e_j^{\ep,m}|+\fl{|\dt_x^+u^\ep(x_j,t_m)|}{\ep}\int_0^1|u_{j,\tht}^{\ep,m}-v_{j,\tht}^{\ep,m}|d\tht\\
&\lesssim |\dt_x^+e_j^{\ep,m}|+\fl{1}{\ep}\left(|e_j^{\ep,m}|+|e_{j+1}^{\ep,m}|\right).
\end{align*}
In view of the inequality that
\begin{align*}
&\Big|\fl{(u_{j,\tht}^{\ep,m})^2}
{|u_{j,\tht}^{\ep,m}|(\ep+|u_{j,\tht}^{\ep,m}|)}-\fl{(v_{j,\tht}^{\ep,m})^2}
{|v_{j,\tht}^{\ep,m}|(\ep+|v_{j,\tht}^{\ep,m}|)}\Big|\\
&=\Big|
\fl{(u_{j,\tht}^{\ep,m})^2-u_{j,\tht}^{\ep,m} v_{j,\tht}^{\ep,m}}{|u_{j,\tht}^{\ep,m}|(\ep+|u_{j,\tht}^{\ep,m}|)}+
\fl{u_{j,\tht}^{\ep,m} v_{j,\tht}^{\ep,m}}{|u_{j,\tht}^{\ep,m}|(\ep+|u_{j,\tht}^{\ep,m}|)}-
\fl{(v_{j,\tht}^{\ep,m})^2}{|v_{j,\tht}^{\ep,m}|(\ep+|v_{j,\tht}^{\ep,m}|)}\Big|\\
&\le \fl{|u_{j,\tht}^{\ep,m}-v_{j,\tht}^{\ep,m}|}{\ep}+\fl{\left|u_{j,\tht}^{\ep,m} (v_{j,\tht}^{\ep,m})^2
(\overline{u_{j,\tht}^{\ep,m}}-\overline{v_{j,\tht}^{\ep,m}})+\ep v_{j,\tht}^{\ep,m}(u_{j,\tht}^{\ep,m} |v_{j,\tht}^{\ep,m}|-|u_{j,\tht}^{\ep,m}|v_{j,\tht}^{\ep,m})\right|}{|u_{j,\tht}^{\ep,m}||v_{j,\tht}^{\ep,m}|
(\ep+|u_{j,\tht}^{\ep,m}|)(\ep+|v_{j,\tht}^{\ep,m}|)}\\
&\le \fl{4|u_{j,\tht}^{\ep,m}-v_{j,\tht}^{\ep,m}|}{\ep},
\end{align*}
we can obtain that
\[I_3\lesssim |\dt_x^+e_j^{\ep,m}|+\fl{1}{\ep}\left(|e_j^{\ep,m}|+|e_{j+1}^{\ep,m}|\right).\]
Thus we can conclude that
\[|\dt_x^+ r_j^{\ep,m}|\lesssim \fl{1}{\ep}\left(|e_j^{\ep,m}|+|e_{j+1}^{\ep,m}|\right)+\ln(\ep^{-1})
\left|\dt_x^+e_j^{\ep,m}\right|.\]
Summing \eqref{eq2} for $m=1, 2, \cdots, n$ ($n\le \fl{T}{\tau}-1$), we obtain
\begin{align*}
|e^{\ep,n+1}|_{H^1}^2+|e^{\ep,n}|_{H^1}^2&\le |e^{\ep,0}|_{H^1}^2+|e^{\ep,1}|_{H^1}^2
+\tau\sum\limits_{m=1}^n\left(|r^{\ep,m}|_{H^1}^2+|\xi^{\ep,m}|_{H^1}^2\right)\\
&\quad+\tau |e^{\ep,n+1}|_{H^1}^2+\tau\sum\limits_{m=0}^{n-1}
(|e^{\ep,m}|_{H^1}^2+|e^{\ep,m+1}|_{H^1}^2).
\end{align*}
Thus when $\tau\le 1/2$, by using Lemmas \ref{local_e} and \ref{initial-e}, we have
\begin{align*}
|e^{\ep,n+1}|_{H^1}^2+|e^{\ep,n}|_{H^1}^2&\lesssim |e^{\ep,0}|_{H^1}^2+|e^{\ep,1}|_{H^1}^2+\tau\sum\limits_{m=1}^{n}
\left(\frac{1}{\ep^2}|e^{\ep,m}|_{H^1}^2+|\xi^{\ep,m}|_{H^1}^2\right)\\
&\quad+\tau|\ln(\ep)|^2\sum\limits_{m=0}^{n-1}\left(|e^{\ep,m}|_{H^1}^2+|e^{\ep,m+1}|_{H^1}^2\right)\\
&\hspace{-2mm}\lesssim \fl{e^{CT|\ln(\ep)|^2}}{\ep^2}(h^2+\tau^2)^2+\tau|\ln(\ep)|^2
\sum\limits_{m=0}^{n-1}(|e^{\ep,m}|_{H^1}^2+|e^{\ep,m+1}|_{H^1}^2).
\end{align*}
Applying the discrete Gronwall's inequality, we can get that
\[|e^{\ep,n+1}|_{H^1}^2\lesssim e^{CT|\ln(\ep)|^2}(h^2+\tau^2)^2/\ep^2,\]
which establishes the error estimate for $\|e^{\ep,k}\|_{H^1}$ for $k=n+1$. Finally the boundedness for the solution $u^{\ep,k}$ can be obtained by the triangle inequality
\[\|u^{\ep,k}\|_\infty\le \|u^\ep(\cdot, t_k)\|_{L^\infty(\Omega)}+\|e^{\ep,k}\|_\infty,\]
and the inverse Sobolev inequality \cite{Thomee}
\[\|e^{\ep,k}\|_\infty\lesssim \| e^{\ep,k}\|_{H^1},\]
which completes the proof of Theorem \ref{thm1}.
\end{proof}

\section{Numerical results}
\label{sec:illust}
In this section, we test the convergence rate of the regularized model
\eqref{RLSE} and the SIFD \eqref{scheme}. To this end, we take $d=1$,
$\Omega={\mathbb R}$ and
$\lambda=-1$ in the LogSE \eqref{LSE}
and consider two different initial data:

\smallskip

{Case I}: A Gaussian initial data, i.e. $u_0$ in \eqref{LSE} is chosen as
\be
u_0(x)=\sqrt[4]{-\lambda/\pi} e^{ivx +\frac{\lambda}{2}x^2}, \qquad x\in {\mathbb R},
\ee
with $v=1$. In this case, the LogSE \eqref{LSE}  admits
the moving Gausson solution \eqref{Gausson} with $v=1$ and $b_0=\sqrt[4]{-\lambda/\pi}$ as the exact solution.

\smallskip

{Case II}:  A general initial data, i.e. $u_0$ in \eqref{LSE} is chosen as
\be
\label{soliton}
u_0(x)=\tanh(x) e^{-x^2}, \qquad x\in {\mathbb R},
\ee
which is the multiplication of a dark soliton of the cubic nonlinear Schr\"{o}dinger equation and a Gaussian.  Notice that in this case, the logarithmic  term $\ln |u_0|^2$ is singular at $x=0$.

\smallskip

The RLogSE \eqref{RLSE} is solved numerically by the SIFD \eqref{scheme}  on domains  $\Og=[-12, 12]$ and $\Og=[-16, 16]$ for {Case I} and  {II}, respectively.  To quantify the numerical errors, we introduce  the following error functions:
\be\label{Neror}
\begin{split}
&\widehat{e}^{\ep}(t_k):=u(\cdot,t_k)-u^{\ep}(\cdot,t_k), \qquad
e^{\ep}(t_k):=u^{\ep}(\cdot, t_k)-u^{\ep,k}, \\
&\widetilde{e}^{\ep}(t_k):=u(\cdot, t_k)-u^{\ep,k}, \qquad\quad\,\,\,\,
e_{E}^{\ep}:=|E(u)-E^{\ep}(u^\ep)|.
\end{split}
\ee
Here $u$ and $u^\ep$ are the exact solutions of the LogSE \eqref{LSE} and RLogSE \eqref{RLSE}, respectively, while
$u^{\ep, k}$ is the numerical solution of the RLogSE \eqref{RLSE} obtained by the SIFD \eqref{scheme}. The `exact' solution $u^{\ep}$  is obtained numerically by the SIFD \eqref{scheme} with a very small time step, e.g. $\tau=0.01/2^9$ and a very fine mesh size, e.g. $h=1/2^{15}$.
Similarly, the `exact' solution $u$ in Case II is obtained numerically
by  the SIFD \eqref{scheme} with a very small time step and a very fine mesh size as well as a very small regularization parameter $\ep$, e.g. $\ep=10^{-14}$.
The energy is obtained by the trapezoidal rule for approximating the integrals in the energy  \eqref{conserv} and \eqref{regen}.

\subsection{Convergence rate of the regularized model}
Here we consider the error between the solutions of the RLogSE \eqref{RLSE} and the LogSE \eqref{LSE}. Fig. \ref{fig:Conv_RLSEs_Case1_And_2_Mod1} shows $\|\widehat{e}^{\ep}\|$, $\|\widehat{e}^{\ep}\|_{H^1}$, $\|\widehat{e}^{\ep}\|_\infty$ (the definition of the norms is given in \eqref{innorm}) at time $t=0.5$ for {Cases I} \& {II},
while Fig. \ref{fig:Conv_RLSE_Energy_Error_and_Errors_WRT_t} depicts $e_{E}^{\ep}(0.5)$ for {Cases I} \& {II} and  time evolution of $\widehat{e}^{\ep}(t)$ with different $\ep$ for {Case I}.
For comparison, similar to Fig. \ref{fig:Conv_RLSEs_Case1_And_2_Mod1}, Fig.  \ref{fig:Conv_RLSEs_Case1_And_2_Mod2}
displays the convergent results from \eqref{RLogSE_2} to \eqref{LSE}.

\begin{figure}[htbp!]
\centerline{
\psfig{figure=./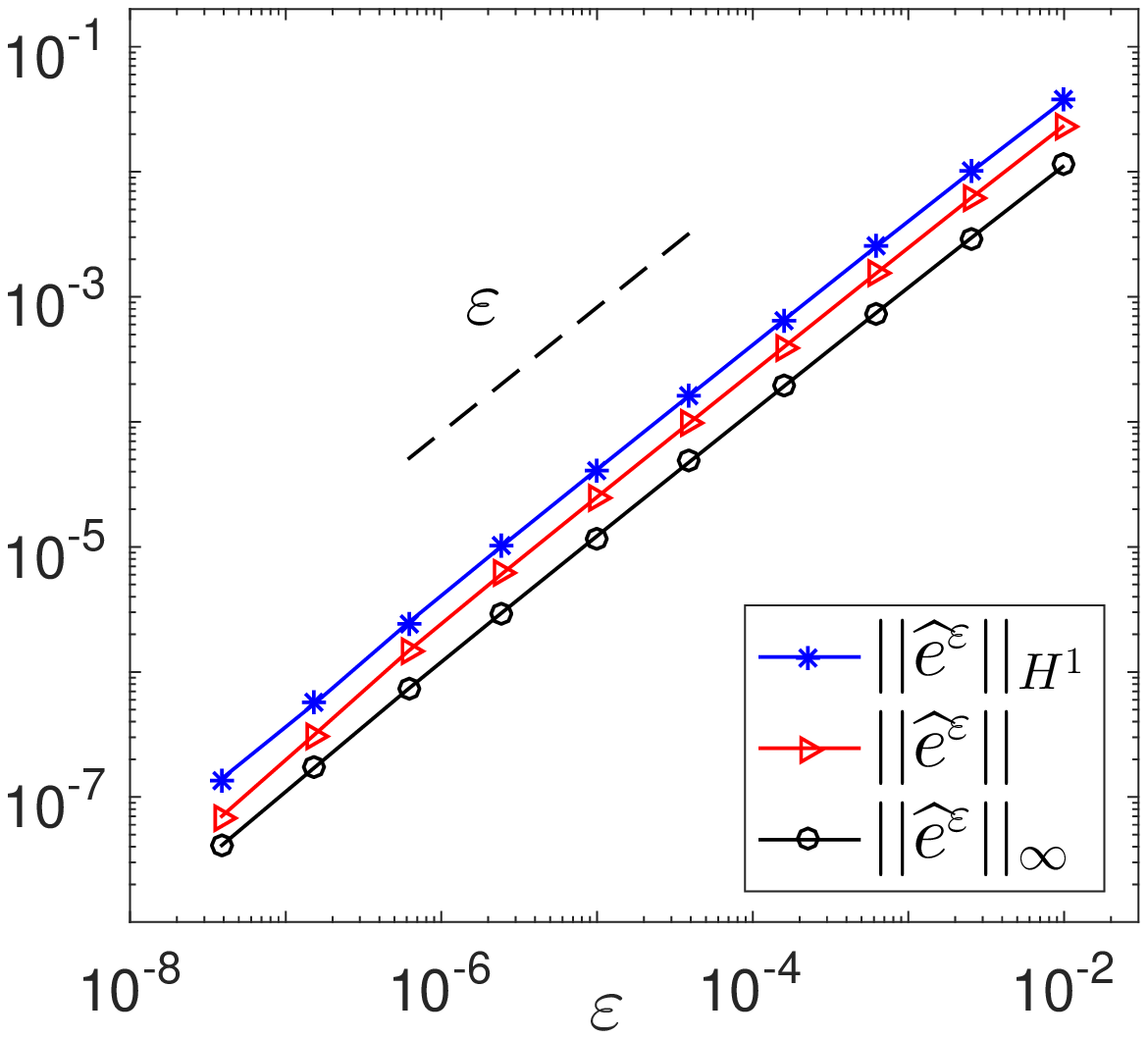,width=2.5in,height=2.5in}
\quad
\psfig{figure=./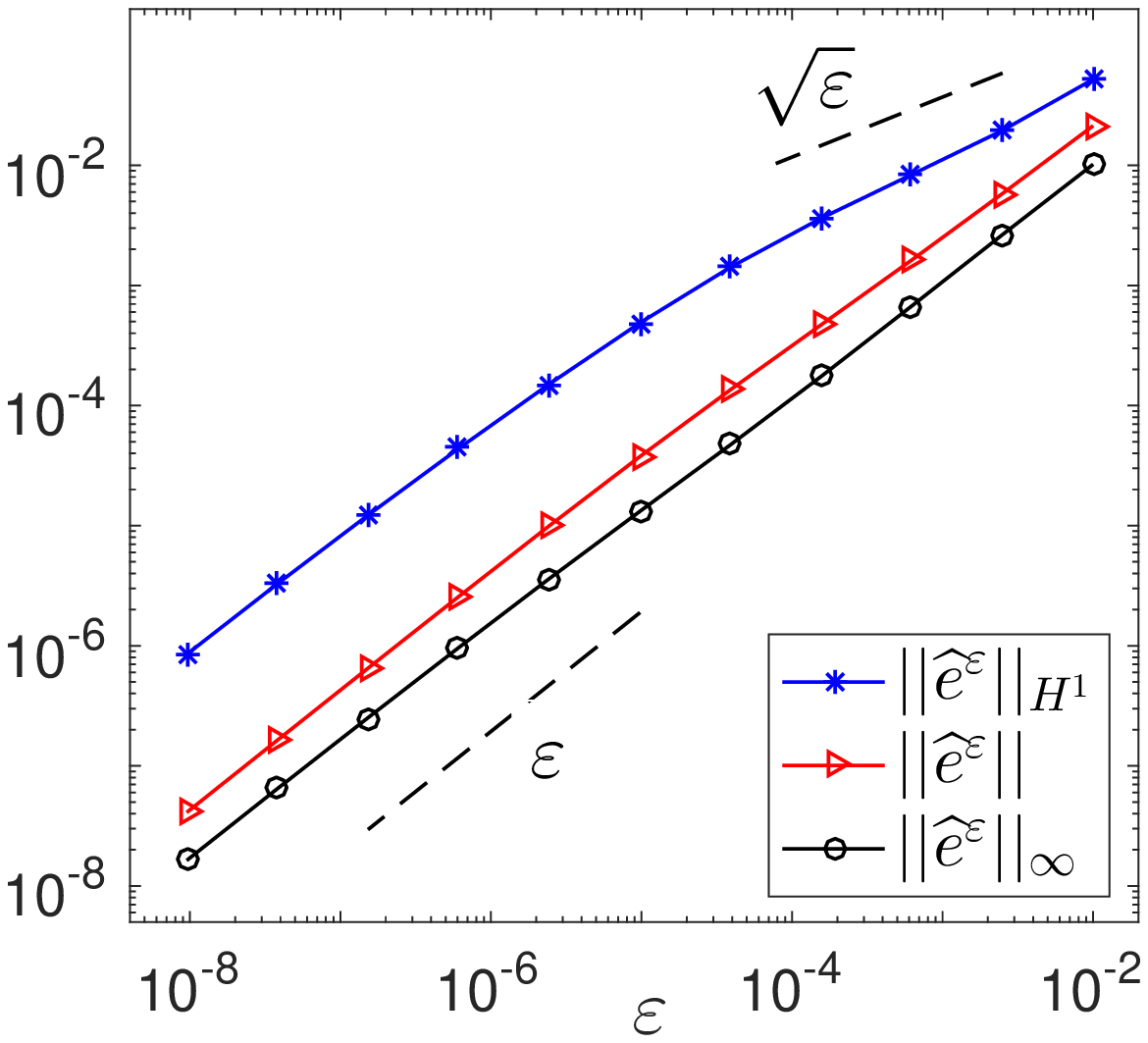,width=2.5in,height=2.5in}}
 \caption{Convergence of the RLogSE \eqref{RLSE} to the LogSE \eqref{LSE},  i.e. the error $\widehat{e}^\ep(0.5)$ in different norms vs the regularization parameter $\ep$  for {Case I} (left) and {Case II} (right).}
\label{fig:Conv_RLSEs_Case1_And_2_Mod1}
\end{figure}

\begin{figure}[htbp!]
\centerline{
\psfig{figure=./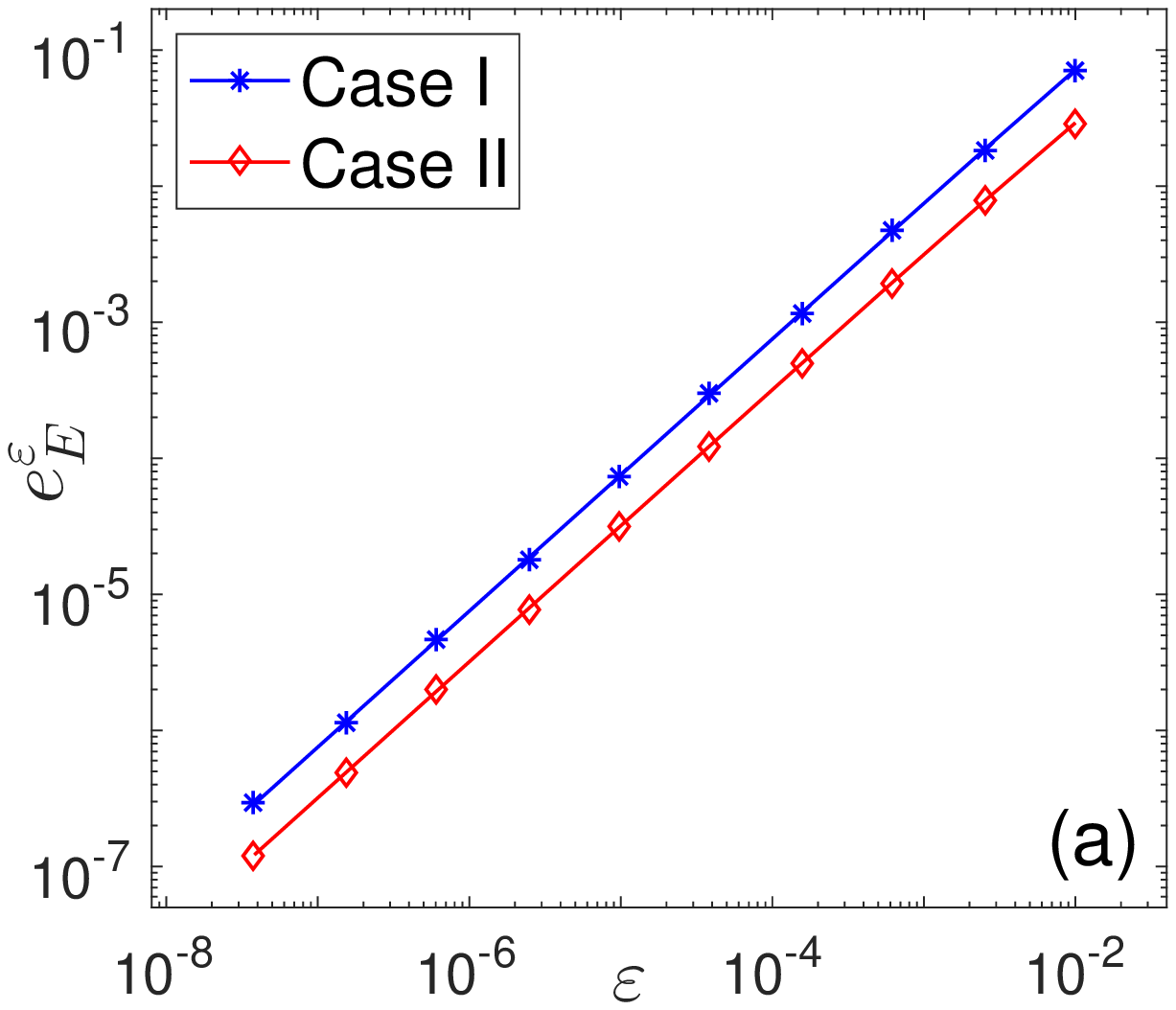,width=2.5in,
height=2.5in}\quad
\psfig{figure=./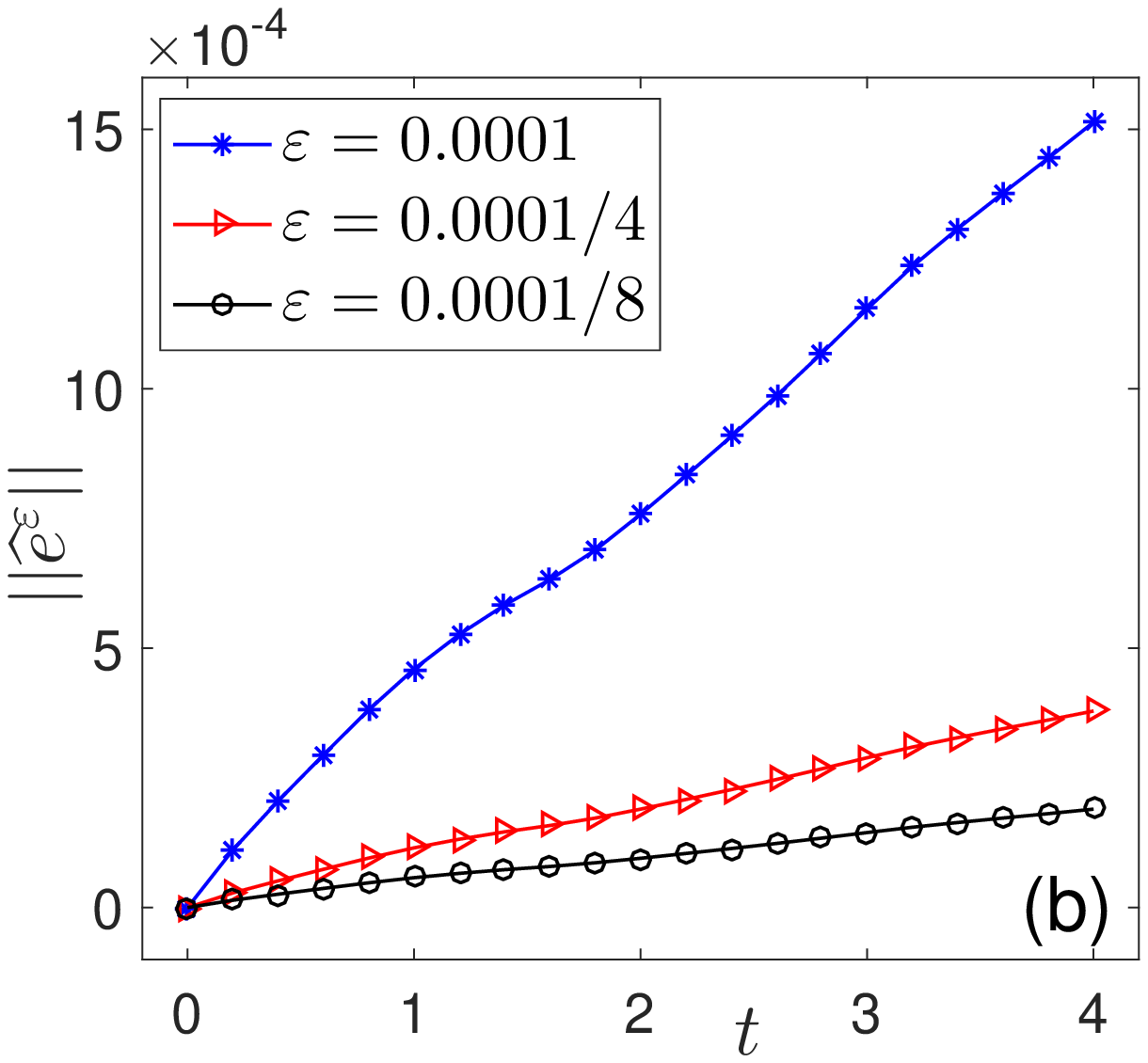,
width=2.5in,height=2.5in}}
 \caption{Convergence of the RLogSE \eqref{RLSE} to the LogSE \eqref{LSE}:
 (a) error in energy $e_{\rm E}^\ep(0.5)$ vs $\ep$ for Cases I \& II,
 and   (b) time evolution of $\|\widehat{e}^\ep(t)\|$ vs time $t$ under different $\ep$  for {Case I}.}
\label{fig:Conv_RLSE_Energy_Error_and_Errors_WRT_t}
\end{figure}

\begin{figure}[htbp!]
\centerline{
\psfig{figure=./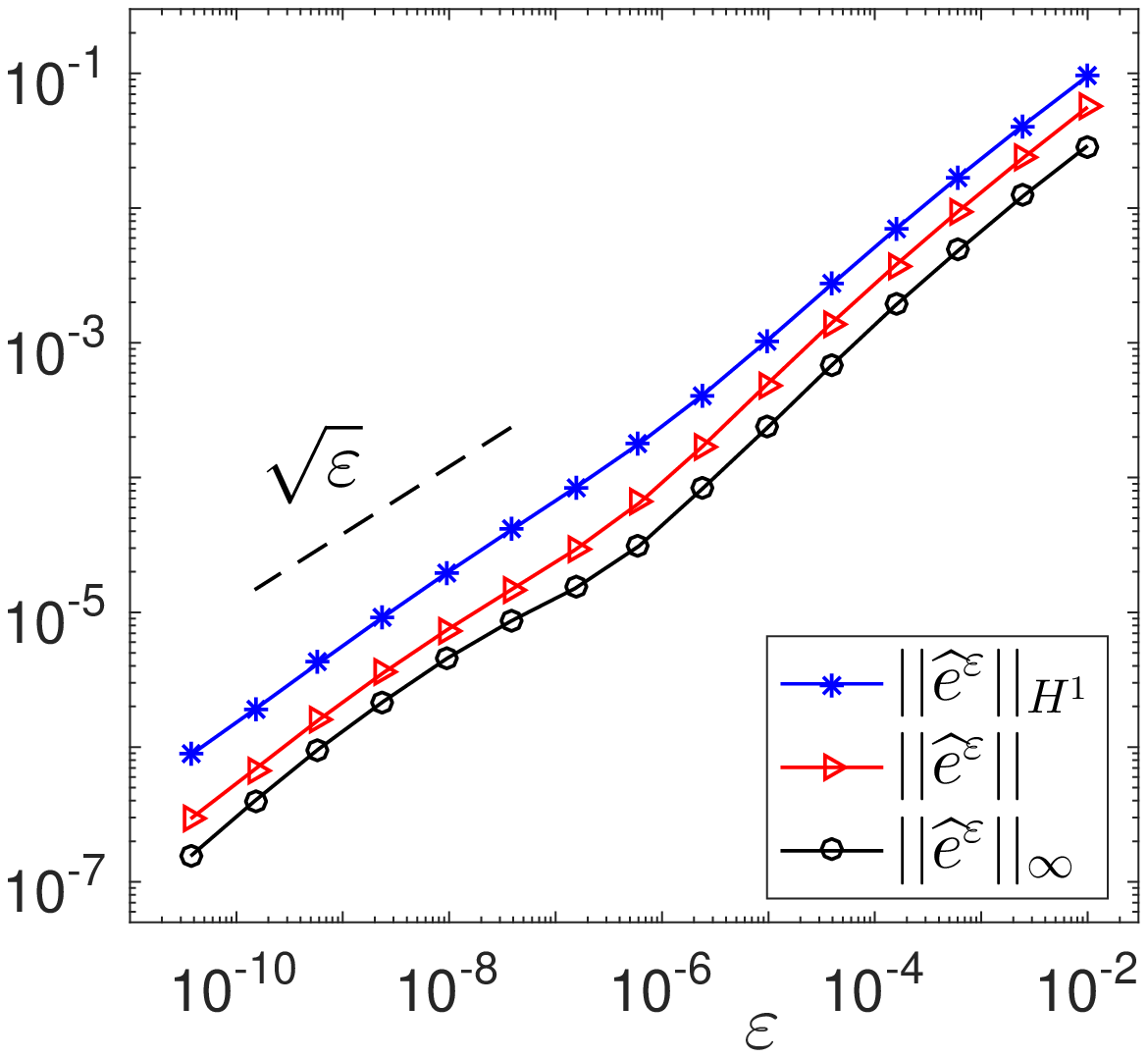,
width=2.5in,height=2.5in}\qquad
\psfig{figure=./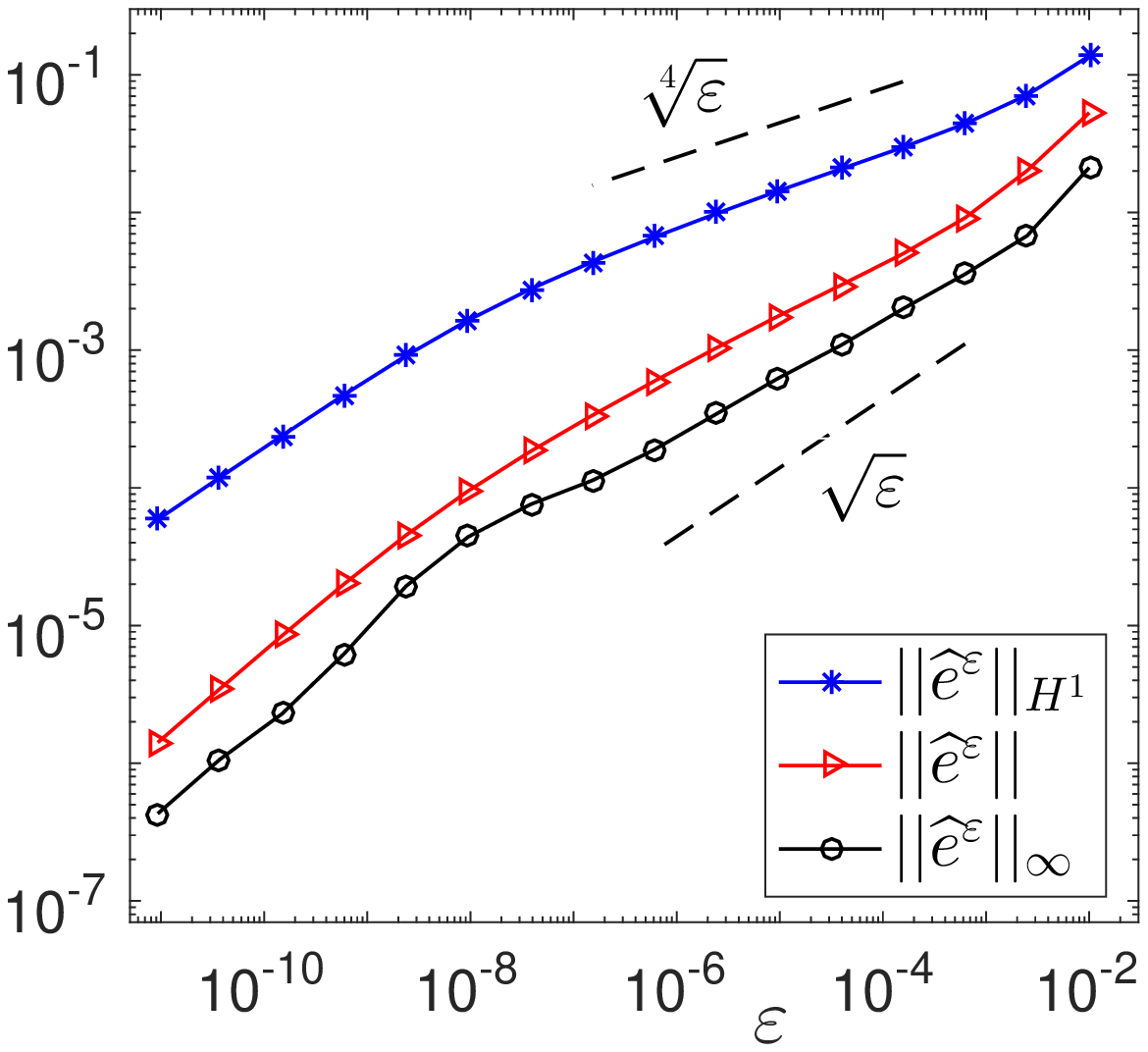,
width=2.5in,height=2.5in}
}
 \caption{Convergence of the RLogSE \eqref{RLogSE_2} to the LogSE \eqref{LSE},  i.e. the error $\widehat{e}^\ep(0.5)$ in different norms vs the regularization parameter $\ep$  for {Case I} (left) and {Case II} (right).}
\label{fig:Conv_RLSEs_Case1_And_2_Mod2}
\end{figure}

From Figs. \ref{fig:Conv_RLSEs_Case1_And_2_Mod1},  \ref{fig:Conv_RLSE_Energy_Error_and_Errors_WRT_t} \&
\ref{fig:Conv_RLSEs_Case1_And_2_Mod2}
and additional numerical results not shown here for brevity,  we can draw the following conclusions: (i) The solution of the RLogSE \eqref{RLSE}
 converges linearly to that of the LogSE \eqref{LSE} in terms
 of the regularization parameter $\ep$ in both $L^2$-norm and $L^\infty$-norm, and respectively, the convergence rate
 becomes $O(\sqrt{\ep})$ in  $H^1$-norm for Case II.
(ii)  The regularized  energy  $E^{\ep}(u^\ep)$ converges linearly to the energy $E(u)$ in terms of $\ep$. (iii) The constant $C$ in
\eqref{Conv_RLSE} may grow linearly with time $T$ and it is independent
of $\ep$.
(iv) The solution of  \eqref{RLogSE_2}
 converges at $O(\sqrt{\ep})$ to that of  \eqref{LSE} in both $L^2$-norm and $L^\infty$-norm, and respectively, the convergence rate
 becomes $O(\ep^{1/4})$ in  $H^1$-norm for Case II. Thus \eqref{RLSE}
 is much more accurate than \eqref{RLogSE_2} for the regularization
 of the LogSE \eqref{LSE}.
 (v) The numerical results agree and confirm our analytical results in Section 2.


\subsection{Convergence rate of the finite difference method}
Here  we test the convergence rate of the SIFD \eqref{scheme}  to the RLogSE \eqref{RLSE} or the LogSE \eqref{LSE} in terms of
mesh size $h$ and time step $\tau$ under any fixed $0<\ep\ll1$ for {Case I}.
Fig. \ref{fig:conv_SIFD_Case1} shows the errors  $\|e^\ep(0.5)\|$ vs time step $\tau$ (with a fixed ratio between mesh size $h$ and time step $\tau$ at $h=75\tau/64$) under different $\ep$. In addition, Table \ref{tab:conv_SIFD_Case1} displays  $\|\widetilde{e}^{\ep}(1)\|$  for varying $\ep$ and $\tau$ \& $h$.

\begin{figure}[htbp!]
\centerline{
\psfig{figure=./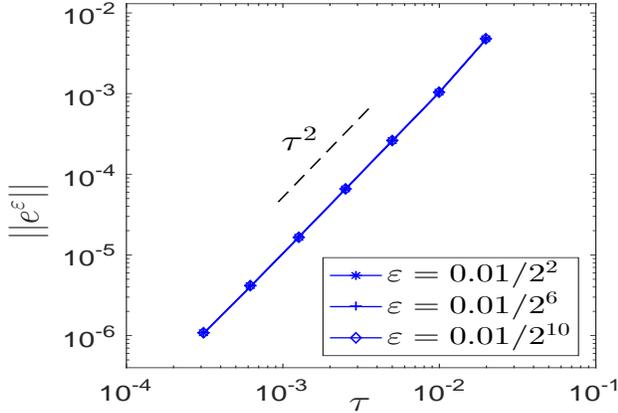,width=3.3in,height=2.2in}
}
 \caption{Convergence of the SIFD \eqref{scheme} to the RLogSE \eqref{RLSE}, i.e. errors $\|e^\ep(0.5)\|$  vs $\tau$ (with $h=75\tau/64$) under different $\ep$  for {Case I} initial data.}
\label{fig:conv_SIFD_Case1}
\end{figure}

\begin{table}[htbp!]
\footnotesize
\tabcolsep 0pt
\caption{Convergence of the SIFD \eqref{scheme} to the LogSE \eqref{LSE}, i.e. $\|\widetilde{e}^{\ep}(1)\|$ for different $\ep$ and $\tau$ \& $h$ for {Case I}.}\label{tab:conv_SIFD_Case1}
\begin{center}\vspace{-0.5em}
\def\temptablewidth{1\textwidth}
{\rule{\temptablewidth}{1pt}}
\begin{tabularx}{\temptablewidth}{@{\extracolsep{\fill}}p{1.38cm}|cccccccccc}
  & $h=0.1$  & $h/2$       & $h/2^2$   & $h/2^3$  & $h/2^4$  & $h/2^5$ &  $h/2^6$ & $h/2^7$  & $h/2^8$ & $h/2^{9}$ \\[0.15em]
 & $\tau=0.1$  & $\tau/2$    & $\tau/2^2$   & $\tau/2^3$  & $\tau/2^4$  & $\tau/2^5$ &  $\tau/2^6$ &  $\tau/2^7$ &  $\tau/2^8$ &  $\tau/2^{9}$  \\[0.3em]
\hline
$\ep$=0.001 & 1.84E-1 & {\bf 4.84E-2} & 1.34E-2 & 5.96E-3 & 4.79E-3 & 4.62E-3 & 4.58E-3 & 4.57E-3 & 4.57E-3 & 4.57E-3  \\ [0.25em]
rate &--&{\bf 1.93}&1.85&1.17&0.31&0.05&	0.01 &0.00&0.00  & 0.00 \\  [0.25em]
\hline
$\ep/4$ & 1.84E-1 & 4.75E-2 & {\bf 1.19E-2} & 3.36E-3 & 1.49E-3 & 1.20E-3 & 1.16E-3 & 1.15E-3  & 1.15E-3 & 1.15E-3	  \\  [0.25em]
rate	&--	&1.96	&{\bf 1.99}	& 1.83	&1.17		&0.31&0.05	 &0.01  &0.00 &0.00	 \\[0.25em]
\hline
$\ep/4^{2}$  		& 1.84E-1& 4.73E-2 & 1.17E-2 & {\bf 2.97E-3} &8.39E-4 & 3.74E-4 & 3.01E-4 & 2.90E-4  &2.88E-4	&2.88E-4  \\  [0.25em]
rate&--	&1.96	&2.01	&{\bf 1.98}	&1.83	&1.17		&0.31 &0.05 &0.01&0.00    \\  [0.25em]
\hline
$\ep/4^3$ 		& 1.84E-1 & 4.72E-2 & 1.16E-2 & 2.91E-3 & {\bf 7.43E-4} & 2.10E-4 & 9.35E-5 & 7.54E-5  	&7.27E-5 &7.21E-5\\  [0.25em]
rate&--&1.96	&2.02	&2.00	&{\bf 1.97}		&1.83&1.16 &0.31  &0.05	&0.01 \\  [0.25em]
\hline
$\ep/4^4$   	 	&1.84E-1 & 4.72E-2 & 1.16E-2 & 2.90E-3 & 7.27E-4 & {\bf 1.86E-4} & 5.24E-5 & 2.34E-5  &1.89E-5	&1.82E-5 \\  [0.25em]
rate	&--	&1.96	&2.02	&2.00	&2.00		&{\bf 1.97} &1.83&1.16 &0.31 &0.05 \\  [0.25em]
\hline
$\ep/4^5$  	& 1.84E-1 & 4.72E-2 & 1.16E-2 & 2.90E-3 & 7.24E-4 & 1.82E-4 & {\bf 4.64E-5} & 1.31E-5  & 5.85E-6	&4.72E-6\\  [0.25em]
rate &--&1.96	&2.02	&2.01&2.00 &1.99 &{\bf 1.97} &1.83&1.16&0.31 \\  [0.25em]
\hline
$\ep_0/4^6$  	& 1.84E-1 & 4.72E-2 & 1.16E-2 & 2.90E-3 & 7.23E-4 & 1.81E-4 & 4.54E-5 & {\bf 1.16E-5} & 3.28E-6	&1.47E-6 \\  [0.25em]
rate&--&1.96	&2.02	&2.01	&2.00		&2.00		&1.99		 &{\bf 1.97}		  &1.83		   &1.16   \\  [0.25em]
\hline
$\ep_0/4^7$  	& 1.84E-1 & 4.72E-2 & 1.16E-2 & 2.89E-3 & 7.23E-4 & 1.81E-4 & 4.52E-5 & 1.14E-5 &{\bf 2.90E-6}	&8.22E-7 \\  [0.25em]
rate	&--		&1.96	&2.02	&2.01	&2.00		&2.00		&2.00		 &2.00		  &{\bf 1.97}		   &1.82  \\  [0.25em]	\end{tabularx}
{\rule{\temptablewidth}{1pt}}
\end{center}
\end{table}

From Fig. \ref{fig:conv_SIFD_Case1},  we can see that
the SIFD \eqref{scheme} converges quadratically at $O(\tau^2+h^2)$
 to the RLogSE \eqref{RLSE} for any fixed $\ep>0$, which confirms
 our error estimates in Theorem \ref{thm1}.
From  Tab. \ref{tab:conv_SIFD_Case1}, we can observe that: (i) the SIFD \eqref{scheme} converges quadratically at $O(\tau^2+h^2)$
to the LogSE \eqref{LSE} only when $\ep$ is sufficiently small, e.g. $\ep\lesssim h^2$ and $\ep\lesssim \tau^2$
(cf. lower triangle below the diagonal in bold letter in Tab. \ref{tab:conv_SIFD_Case1}), and (ii) when $\tau$ \& $h$ is sufficiently small, i.e., $\tau^2\lesssim \ep$ \& $h^2\lesssim \ep$,  the RLogSE  \eqref{RLSE}  converge linearly  at $O(\ep)$ to the LogSE \eqref{LSE} (cf. each column
in the right most of Table \ref{tab:conv_SIFD_Case1}), which confirms
the error bounds in Corollary \ref{cor1}.

\section{Conclusion}
In order to overcome the singularity of the log-nonlinearity
in the logarithmic Schr\"odinger equation (LogSE), we proposed a regularized logarithmic Schr\"odinger equation (RLogSE) with a regularization parameter $0<\ep\ll 1$ and established linear convergence between RLogSE and LogSE
in terms of the small regularization parameter. Then we presented a semi-implicit finite difference method for discretizing RLogSE
and proved second-order convergence rates in terms of mesh size $h$ and
time step $\tau$. Finally, we established error bounds of the semi-implicit finite difference method to LogSE, which depend explicitly on the mesh
size $h$ and time step $\tau$ as well as the small regularization parameter
$\ep$. Our numerical results confirmed our error bounds and demonstrated
that they are sharp.


\bibliographystyle{siam}

\bibliography{biblio}

\begin{thebibliography}{10}

\bibitem{akrivis}
{\sc G.~D. Akrivis, V.~A. Dougalis, and O.~A. Karakashian}, {\em On fully
  discrete {G}alerkin methods of second-order temporal accuracy for the
  nonlinear {S}chr{\"o}dinger equation}, Numer. Math., 59 (1991), pp.~31--53.

\bibitem{Ar16}
{\sc A.~H. Ardila}, {\em Orbital stability of {G}ausson solutions to
  logarithmic {S}chr\"odinger equations}, Electron. J. Differ. Eq., 335 (2016),
  pp.~1--9.

\bibitem{BEC}
{\sc A.~V. Avdeenkov and K.~G. Zloshchastiev}, {\em Quantum bose liquids with
  logarithmic nonlinearity: Self-sustainability and emergence of spatial
  extent}, J. Phys. B: Atomic, Molecular Optical Phys., 44 (2011), p.~195303.

\bibitem{bao2012}
{\sc W.~Bao and Y.~Cai}, {\em Uniform error estimates of finite difference
  methods for the nonlinear {S}chr{\"o}dinger equation with wave operator},
  SIAM J. Numer. Anal., 50 (2012), pp.~492--521.

\bibitem{bao2013}
\leavevmode\vrule height 2pt depth -1.6pt width 23pt, {\em Optimal error
  estimates of finite difference methods for the {G}ross-{P}itaevskii equation
  with angular momentum rotation}, Math. Comput., 82 (2013), pp.~99--128.

\bibitem{bao2003}
{\sc W.~Bao, D.~Jaksch, and P.~A. Markowich}, {\em Numerical solution of the
  {G}ross--{P}itaevskii equation for {B}ose--{E}instein condensation}, J.
  Comput. Phys., 187 (2003), pp.~318--342.

\bibitem{BiMy76}
{\sc I.~Bia{\l}ynicki-Birula and J.~Mycielski}, {\em Nonlinear wave mechanics},
  Ann. Physics, 100 (1976), pp.~62--93.

\bibitem{BiMy79}
\leavevmode\vrule height 2pt depth -1.6pt width 23pt, {\em Gaussons: {S}olitons
  of the logarithmic {S}chr{\"o}dinger equation}, Special issue on solitons in
  physics, Phys. Scripta, 20 (1979), pp.~539--544.

\bibitem{buljan}
{\sc H.~Buljan, A.~{\v{S}}iber, M.~Solja{\v{c}}i{\'c}, T.~Schwartz, M.~Segev,
  and D.~Christodoulides}, {\em Incoherent white light solitons in
  logarithmically saturable noninstantaneous nonlinear media}, Phys. Rev. E, 68
  (2003), p.~036607.

\bibitem{CaGa-p}
{\sc R.~Carles and I.~Gallagher}, {\em Universal dynamics for the defocusing
  logarithmic {S}chr{\"o}dinger equation}, Duke Math. J.,  (2017).
\newblock To appear. Archived as
  \url{https://hal.archives-ouvertes.fr/hal-01398526}.

\bibitem{Caz83}
{\sc T.~Cazenave}, {\em Stable solutions of the logarithmic {S}chr\"odinger
  equation}, Nonlinear Anal., 7 (1983), pp.~1127--1140.

\bibitem{CazCourant}
\leavevmode\vrule height 2pt depth -1.6pt width 23pt, {\em Semilinear
  {S}chr\"odinger equations}, vol.~10 of Courant Lecture Notes in Mathematics,
  New York University Courant Institute of Mathematical Sciences, New York,
  2003.

\bibitem{CaHa80}
{\sc T.~Cazenave and A.~Haraux}, {\em \'{E}quations d'\'evolution avec non
  lin\'earit\'e logarithmique}, Ann. Fac. Sci. Toulouse Math. (5), 2 (1980),
  pp.~21--51.

\bibitem{CaLi82}
{\sc T.~Cazenave and P.-L. Lions}, {\em Orbital stability of standing waves for
  some nonlinear {S}chr\"odinger equations}, Comm. Math. Phys., 85 (1982),
  pp.~549--561.

\bibitem{GLN10}
{\sc P.~Guerrero, J.~L. L{{\'o}}pez, and J.~Nieto}, {\em Global {$H^1$}
  solvability of the 3{D} logarithmic {S}chr{\"o}dinger equation}, Nonlinear
  Anal. Real World Appl., 11 (2010), pp.~79--87.

\bibitem{hansson}
{\sc T.~Hansson, D.~Anderson, and M.~Lisak}, {\em Propagation of partially
  coherent solitons in saturable logarithmic media: A comparative analysis},
  Phys. Rev. A, 80 (2009), p.~033819.

\bibitem{Hef85}
{\sc E.~F. Hefter}, {\em Application of the nonlinear {S}chr\"odinger equation
  with a logarithmic inhomogeneous term to nuclear physics}, Phys. Rev. A, 32
  (1985), pp.~1201--1204.

\bibitem{HeRe80}
{\sc E.~S. Hernandez and B.~Remaud}, {\em General properties of
  {G}ausson-conserving descriptions of quantal damped motion}, Physica A, 105
  (1980), pp.~130--146.

\bibitem{FEM}
{\sc O.~Karakashian and C.~Makridakis}, {\em A space-time finite element method
  for the nonlinear {S}chr{\"o}dinger equation: the continuous {G}alerkin
  method}, SIAM J. Numer. Anal., 36 (1999), pp.~1779--1807.

\bibitem{KEB00}
{\sc W.~Krolikowski, D.~Edmundson, and O.~Bang}, {\em Unified model for
  partially coherent solitons in logaritmically nonlinear media}, Phys. Rev. E,
  61 (2000), pp.~3122--3126.

\bibitem{leoni2017}
{\sc G.~Leoni}, {\em A first course in Sobolev spaces}, American Mathematical
  Soc., 2017.

\bibitem{DFGL03}
{\sc S.~D. Martino, M.~Falanga, C.~Godano, and G.~Lauro}, {\em Logarithmic
  {S}chr{\"o}dinger-like equation as a model for magma transport}, Europhys.
  Lett., 63 (2003), pp.~472--475.

\bibitem{Nir59}
{\sc L.~Nirenberg}, {\em On elliptic partial differential equations}, Ann.
  Scuola Norm. Sup. Pisa (3), 13 (1959), pp.~115--162.

\bibitem{taha1984}
{\sc T.~R. Taha and M.~I. Ablowitz}, {\em Analytical and numerical aspects of
  certain nonlinear evolution equations. {II}. {N}umerical, nonlinear
  {S}chr{\"o}dinger equation}, J. Comput. Phys., 55 (1984), pp.~203--230.

\bibitem{Thomee}
{\sc V.~Thom\'ee}, {\em Galerkin finite element methods for parabolic
  problems}, vol.~25 of Springer Series in Computational Mathematics,
  Springer-Verlag, Berlin, 1997.

\bibitem{yasue}
{\sc K.~Yasue}, {\em Quantum mechanics of nonconservative systems}, Annals
  Phys., 114 (1978), pp.~479--496.

\bibitem{Zlo10}
{\sc K.~G. Zloshchastiev}, {\em Logarithmic nonlinearity in theories of quantum
  gravity: {O}rigin of time and observational consequences}, Grav. Cosmol., 16
  (2010), pp.~288--297.

\end{thebibliography}

\end{document}